\DeclareMathOperator{\lspan}{span}       
\DeclareMathOperator{\conv}{conv}        
\DeclareMathOperator{\Lip}{Lip}          
\DeclareMathOperator{\lip}{lip}          
\DeclareMathOperator{\Br}{Br}            
\newcommand{\NN}{\mathbb{N}}             
\newcommand{\ZZ}{\mathbb{Z}}             
\newcommand{\QQ}{\mathbb{Q}}             
\newcommand{\RR}{\mathbb{R}}             
\newcommand{\abs}[1]{\left|{#1}\right|}                     
\newcommand{\pare}[1]{\left({#1}\right)}                    
\newcommand{\set}[1]{\left\{{#1}\right\}}                   
\newcommand{\norm}[1]{\left\|{#1}\right\|}                  
\newcommand{\dual}[1]{{#1}^\ast}                            
\newcommand{\ball}[1]{B_{{#1}}}                             
\newcommand{\duality}[1]{\left<{#1}\right>}                 
\newcommand{\cl}[1]{\overline{#1}}                          
\newcommand{\wconv}{\stackrel{w}{\rightarrow}}              
\newcommand{\F}[1]{\mathcal{F}({#1})}        
\newcommand{\Free}{\mathcal F}
\newcommand{\lipfree}[1]{\mathcal{F}({#1})}  
\newcommand{\lipnorm}[1]{\norm{#1}_L}        
\theoremstyle{plain}
\newtheorem{theorem}{Theorem}[section]
\newtheorem{lemma}[theorem]{Lemma}
\newtheorem{corollary}[theorem]{Corollary}
\newtheorem{proposition}[theorem]{Proposition}
\newtheorem{claim}{Claim}
\newtheorem{fact}[theorem]{Fact}
\newtheorem*{T1}{Theorem~\ref{tm:separable_tree_embedding}}
\newtheorem*{T2}{Theorem~\ref{t:char_of_embeddability}}
\newtheorem*{T3}{Theorem~\ref{th:lipfree_tree_embedding}}
\theoremstyle{definition}
\newtheorem*{definition*}{Definition}
\newtheorem{definition}[theorem]{Definition}
\newtheorem{example}[theorem]{Example}
\newtheorem{question}{Question}
\theoremstyle{remark}
\newtheorem{remark}[theorem]{Remark}
\begin{document}

\title{Embeddings of Lipschitz-free spaces into $\ell_1$}

\author[R. J. Aliaga]{Ram\'on J. Aliaga}
\address[R. J. Aliaga]{Universitat Polit\`ecnica de Val\`encia, Instituto Universitario de Matem\'atica Pura y Aplicada, Camino de Vera S/N, 46022 Valencia, Spain}
\email{raalva@upvnet.upv.es}

\author[C. Petitjean]{Colin Petitjean}
\address[C. Petitjean]{LAMA, Univ Gustave Eiffel, UPEM, Univ Paris Est Creteil, CNRS, F-77447, Marne-la-Vall\'ee, France}
\email{colin.petitjean@u-pem.fr}

\author[A. Proch\'azka]{Anton\'in Proch\'azka}
\address[A. Proch\'azka]{Laboratoire de Math\'ematiques de Besan\c con,
Universit\'e Bourgogne Franche-Comt\'e,
CNRS UMR-6623,
16, route de Gray,
25030 Besan\c con Cedex, France}
\email{antonin.prochazka@univ-fcomte.fr}

\date{} 


\begin{abstract}
We show that, for a separable and complete metric space $M$, the Lipschitz-free space $\F M$ embeds linearly and almost-isometrically into $\ell_1$ if and only if $M$ is a subset of an $\RR$-tree with length measure 0. Moreover, it embeds isometrically if and only if the length measure of the closure of the set of branching points of $M$ (taken in any minimal $\RR$-tree that contains $M$) is negligible. 
We also prove that, for any subset $M$ of an $\RR$-tree, every extreme point of the unit ball of $\F M$ is an element of the form $(\delta(x)-\delta(y))/d(x,y)$ for $x\neq y\in M$. 
\end{abstract}


\subjclass[2010]{Primary 46B20, 05C05; Secondary 46B25, 54C25}



\keywords{embedding, extreme point, length measure, Lipschitz-free space, Lipschitz homeomorphism, $\RR$-tree}

\maketitle


\section{Introduction}

Our goal in this paper is to provide some contributions to the isometric and isomorphic classification of Lipschitz-free Banach spaces. Let us start by giving some necessary definitions. Given a \emph{pointed metric space} $(M,d)$, i.e. one where we have selected an element $0$ as a base point, we consider the space $\Lip_0(M)$ of all real-valued Lipschitz functions on $M$ that map the base point to $0$. $\Lip_0(M)$ is then a Banach space endowed with the norm given by the Lipschitz constant
$$
\lipnorm{f}=\sup\set{\frac{\abs{f(x)-f(y)}}{d(x,y)}:x,y\in M,x\neq y}
$$
for $f\in\Lip_0(M)$. Let $\delta\colon M\rightarrow\dual{\Lip_0(M)}$ map each $x\in M$ to its evaluation functional given by $\duality{f,\delta(x)}=f(x)$ for $f\in\Lip_0(M)$. Then $\delta$ is an isometric embedding of $M$ into $\dual{\Lip_0(M)}$, and the space $\lipfree{M}=\cl{\lspan}\,\delta(M)$ is called the \emph{Lipschitz-free space} over $M$. It is well known that $\lipfree{M}$ is an isometric predual of $\Lip_0(M)$; see the monograph \cite{Weaver2} for reference (where $\lipfree{M}$ is denoted $\text{\AE}(M)$).

The structure of Lipschitz-free spaces is not completely understood to this day. One way to advance this knowledge is to study the possible embeddings of classical Banach spaces into Lipschitz-free spaces or vice versa. A major step in this direction was given by Godard \cite{Godard_2010} when he proved that $\lipfree{M}$ can be linearly isometrically embedded into an $L_1$ space if and only if $M$ can be isometrically embedded into an $\RR$-tree. Let us just say for now that an $\RR$-tree is a metric space where each pair of points is connected by a unique path that is isometric to a segment of~$\RR$. 

In order to discuss the extensions of Godard's result, we briefly introduce two necessary concepts related to $\RR$-trees that will be developed further later. First, a canonical measure $\lambda$ called the \emph{length measure} may be defined on such a space that extends the concept of Lebesgue measure in $\RR$. Second, an element $x$ of an $\RR$-tree $T$ is said to be a \emph{branching point} of $T$ if $T\setminus\set{x}$ has at least three connected components.

Let us now consider the related problem of characterizing all complete metric spaces $M$ such that $\lipfree{M}$ can be isometrically embedded into $\ell_1$. Since $\ell_1$ is contained in $L_1$, such a metric space must be a subset of an $\RR$-tree. Godard also showed that if $A$ is a subset of $\RR$ with positive measure then $\lipfree{A}$ contains an isometric copy of $L_1$. It follows easily that the same is true for $M$ if $\lambda(M)>0$. Hence $M$ must also be \emph{negligible}, i.e. $\lambda(M)=0$. The question immediately arises whether these necessary conditions are sufficient:

\begin{question}
\label{q:embedding_into_l1}
If $M$ is a closed 
subset of an $\RR$-tree such that $\lambda(M)=0$, is it true that $\lipfree{M}$ embeds isometrically into some $\ell_1(\Gamma)$?
\end{question}

In \cite{DaKaPr_2016}, Dalet, Kaufmann and Proch\'azka provided further insight into this problem. They showed that $\lipfree{M}$ is isometric to $\ell_1(\Gamma)$ if and only if $M$ is negligible and moreover contains all branching points. As a consequence, they gave a positive answer to Question \ref{q:embedding_into_l1} for compact $M$ based on the following observation: if $M$ is compact and negligible, then the closure of the set of branching points 
(taken in any minimal $\RR$-tree that contains $M$) 
is also negligible.
However, easy examples show that the observation is not valid in general for non-compact $M$.

Our first main result 
shows that the failure of this observation indeed creates an obstacle for isometric embedding of $\Free(M)$ into $\ell_1$. In what follows, $\Br(M)$ denotes the set of branching points of $M$.

\begin{theorem}\label{t:char_of_embeddability}
Let $M$ be a complete metric space. Then the following are equivalent:
\begin{enumerate}[leftmargin=1cm, label={\upshape{(\roman*)}}]
\item $\lipfree{M}$ is isometrically isomorphic to a subspace of 
$\ell_1(\Gamma)$ for some set $\Gamma$,
\item $M$ is a subset of an $\RR$-tree such that $\lambda(M)=0$ and $\lambda(\cl{\Br(M)})=0$.
\end{enumerate}
\end{theorem}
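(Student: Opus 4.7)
Plan:

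For $(\text{ii})\Rightarrow(\text{i})$, the strategy is to enlarge $M$ slightly so that the Dalet--Kaufmann--Proch\'azka characterization \cite{DaKaPr_2016} of Lipschitz-free spaces isomorphic to $\ell_1(\Gamma)$ can be applied. Let $T$ be the minimal $\RR$-tree containing $M$, and set $N := M \cup \overline{\Br(M)}$ with closure taken in $T$. Then $N$ is closed in $T$ (hence complete), $\lambda(N) \leq \lambda(M) + \lambda(\overline{\Br(M)}) = 0$ by hypothesis and subadditivity, and since $\overline{\Br(M)}$ sits in the convex hull of $M$, the minimal $\RR$-tree containing $N$ is still $T$; its branching points coincide with $\Br(M)$, which is contained in $N$. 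The DKP characterization then gives $\Free(N) \cong \ell_1(\Gamma)$, and (i) follows from the canonical isometric embedding $\Free(M) \hookrightarrow \Free(N)$ furnished by McShane extension of Lipschitz functions from $M$ to $N$.

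For $(\text{i})\Rightarrow(\text{ii})$, composing the assumed embedding with the natural isometric inclusion $\ell_1(\Gamma) \hookrightarrow L_1$ and applying Godard's theorem immediately embeds $M$ isometrically into an $\RR$-tree $T$. The condition $\lambda(M) = 0$ follows because $\lambda(M) > 0$ would place an isometric copy of $L_1[0,1]$ inside $\Free(M)$ by Godard's construction, which cannot live inside $\ell_1(\Gamma)$: subspaces of $\ell_1(\Gamma)$ have the Radon--Nikodym property whereas $L_1[0,1]$ does not.

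The crux is proving $\lambda(\overline{\Br(M)}) = 0$. My plan here is contraposition: assuming $\lambda(\overline{\Br(M)}) > 0$, build an isometric copy of some nonatomic $L_1$ inside $\Free(M)$. Density of $\Br(M)$ in $\overline{\Br(M)}$ together with standard properties of the length measure should yield an arc $J \subset T$ such that $E := J \cap \overline{\Br(M)}$ is compact with $\lambda(E) > 0$. For each $x \in \Br(M) \cap J$, select a witness point $c(x) \in M$ on some branch of $T$ at $x$ transverse to $J$. Under the standard identification $\Free(T) \cong L_1(T, \lambda)$, the element $\delta(x) - \delta(c(x))$ corresponds to the indicator of the geodesic from $x$ to $c(x)$. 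Taking suitable linear combinations and limits of these ``transverse derivatives'' parameterized along $J$ should produce an isometric copy of $L_1(E, \lambda)$ inside $\Free(M)$, contradicting the RNP inherited from $\ell_1(\Gamma)$.

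The principal obstacle is precisely this last construction: a measurable selection of witness points, combined with delicate control of the geometry of the associated indicator functions inside $L_1(T, \lambda)$, is needed so that the resulting copy of $L_1$ is a genuine isometric (not merely isomorphic) subspace of $\Free(M)$. The direction $(\text{ii})\Rightarrow(\text{i})$ is essentially a routine consequence of the DKP characterization; the analytic substance of the theorem is concentrated in the converse implication.
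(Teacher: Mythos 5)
Your direction (ii)$\Rightarrow$(i) is correct and essentially equivalent to the paper's: enlarging $M$ to $N=M\cup\cl{\Br(M)}$, checking $\lambda(N)=0$ and $\Br(N)=\Br(M)\subset N$, invoking the Dalet--Kaufmann--Proch\'azka characterization, and composing with the canonical isometric embedding $\F{M}\hookrightarrow\F{N}$ all works (the paper simply cites Godard's Theorem 3.2, which packages the same idea). Likewise, in (i)$\Rightarrow$(ii) your derivation of the tree embedding and of $\lambda(M)=0$ via Godard plus the Radon--Nikod\'ym property is fine.

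The crux, however, rests on a strategy that provably cannot succeed. You propose to show $\lambda(\cl{\Br(M)})=0$ by contraposition, producing an isometric copy of a nonatomic $L_1$ inside $\F{M}$ and contradicting the RNP. But once $\lambda(M)=0$ is known, $\F{M}$ \emph{never} contains a copy of $L_1$ and always has the Schur and Radon--Nikod\'ym properties, irrespective of $\lambda(\cl{\Br(M)})$: this is Corollary~\ref{c:RNPSchurL1}, and Example~\ref{ex:dense_branching} exhibits a proper $M$ with $\lambda(M)=0$ and $\lambda(\cl{\Br(M)})=1$ whose free space embeds $(1+\varepsilon)$-isomorphically into $\ell_1$ for every $\varepsilon>0$ (Theorem~\ref{th:lipfree_tree_embedding}). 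Hence no isomorphic invariant such as the RNP can detect the failure of \emph{isometric} embeddability; the obstruction is necessarily of a finer, isometric nature. Your concrete construction also fails geometrically: the indicators $\mathbf 1_{[x,c(x)]}$ of transverse branches at distinct branching points $x$ of $J$ have pairwise disjoint supports in $L_1(\conv(M),\lambda)$, so their closed span is a weighted $\ell_1$, not $L_1(E,\lambda)$; a nonatomic $L_1$ can only arise when $\lambda(M)>0$. The paper instead exploits a two-dimensional isometric invariant: for $u,v\in\ell_1\setminus\set{0}$ the second distributional derivative of $t\mapsto\norm{u-tv}_1$ is a discrete measure, whereas for $v=\delta(x)$ and $u=\sum_n 2^{-n}\delta(x_n)/d(0,x_n)$, with $x_n\in M$ witnessing a dense sequence of branching points in a segment meeting $\cl{\Br(M)}$ in positive measure, this second derivative acquires a nontrivial continuous part. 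Some invariant of this isometric type is what your argument is missing.
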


Despite the negative answer to Question~\ref{q:embedding_into_l1}, the second
main result in this paper states that, when $M$ is separable, $\lipfree{M}$ embeds \emph{almost isometrically} into $\ell_1$ under the specified hypotheses:

\begin{theorem}
\label{th:lipfree_tree_embedding}
Let $M$ be a complete separable metric space. Then the following are equivalent:
\begin{enumerate}[leftmargin=1cm, label={\upshape{(\roman*)}}]
\item $\lipfree{M}$ is $(1+\varepsilon)$-isomorphic to a subspace of
$\ell_1$ for every $\varepsilon>0$,
\item $M$ is a subset of an $\RR$-tree such that $\lambda(M)=0$.
\end{enumerate}
\end{theorem}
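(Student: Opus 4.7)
The plan is to treat the two implications separately, with (ii)$\Rightarrow$(i) carrying almost all of the content.

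For (i)$\Rightarrow$(ii), I would take a sequence of $(1+\varepsilon_n)$-embeddings of $\F M$ into $\ell_1$ with $\varepsilon_n\downarrow 0$ and, by a standard non-principal ultraproduct argument, extract an isometric embedding of $\F M$ into an ultrapower $(\ell_1)_{\mathcal U}$. Since $(\ell_1)_{\mathcal U}$ is an abstract $L_1(\mu)$-space, Godard's theorem forces $M$ to embed isometrically into an $\RR$-tree. To rule out $\lambda(M)>0$, I would invoke Godard's further observation that $\lambda(M)>0$ produces an isometric copy of $L_1[0,1]$ inside $\F M$, which cannot sit isomorphically in $\ell_1$: subspaces of $\ell_1$ enjoy the Schur property, while $L_1[0,1]$ does not.

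For (ii)$\Rightarrow$(i), the plan is to reduce to Theorem~\ref{t:char_of_embeddability} via a small bi-Lipschitz perturbation. Given $\varepsilon>0$, I aim to build a metric space $M_\varepsilon$ and a bijection $\phi\colon M\to M_\varepsilon$ with $\max(\Lip\phi,\Lip\phi^{-1})\le 1+\varepsilon$ such that $M_\varepsilon$ embeds isometrically in some $\RR$-tree with both $\lambda(M_\varepsilon)=0$ and $\lambda(\cl{\Br(M_\varepsilon)})=0$. Theorem~\ref{t:char_of_embeddability} then gives an isometric embedding of $\F{M_\varepsilon}$ into some $\ell_1(\Gamma)$, which reduces to $\ell_1$ because $\F{M_\varepsilon}$ is separable. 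Composing with the $(1+\varepsilon)^2$-isomorphism $\F M\to\F{M_\varepsilon}$ induced by $\phi$ completes the argument after rescaling $\varepsilon$.

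To construct $M_\varepsilon$, I would embed $M$ isometrically into its minimal $\RR$-tree $T=\conv(M)$, which is separable so that $\Br(M)$ is at most countable, even though $\cl{\Br(M)}$ may have positive length measure---precisely the obstruction we need to defuse. I would fix a countable (hence $\lambda$-negligible) set $C\subseteq T$ that is $\varepsilon$-dense in $\cl{\Br(M)}$ and injectively relocate each $b\in\Br(M)$ to some $c(b)\in C$ at distance at most $\varepsilon$, adjusting the lengths of the incident arcs accordingly and transporting the points of $M$ rigidly along the arcs on which they lie. Because any pair of points in $M$ is connected by a geodesic crossing at most two relevant branching points, a perturbation of order $\varepsilon$ at those branching points yields a $(1+O(\varepsilon))$-bi-Lipschitz distortion; by construction $\Br(M_\varepsilon)\subseteq C$ is negligible, and $M_\varepsilon$ inherits negligibility from $M$ through the bi-Lipschitz bound.

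The main obstacle will be keeping the perturbed distance function a bona fide $\RR$-tree metric when branching points cluster: the relocation $b\mapsto c(b)$ must be arranged so that the four-point condition survives and the distortion stays uniform. A clean way to handle this is to process one maximal arc of the skeleton of $T$ at a time, approximating the portion of $\cl{\Br(M)}$ in that arc by a Cantor-type negligible subset via an order-preserving matching; the tree property then reduces to a finite four-point check on each quadruple from $M$.
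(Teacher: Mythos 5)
Your overall architecture matches the paper's: for (i)$\Rightarrow$(ii) the ultraproduct of the $(1+\varepsilon_n)$-embeddings together with Godard's theorem is exactly Proposition~\ref{p:ultraproduct}, and negligibility follows since $\ell_1$ does not contain $L_1$; for (ii)$\Rightarrow$(i) the paper likewise replaces $M$ by a $(1+\varepsilon)$-Lipschitz homeomorphic copy $N$ inside an $\RR$-tree with $\lambda(N)=0$ and $\lambda(\cl{\Br(N)})=0$ and then applies Godard's isometric embedding. The gap is in your construction of the perturbation. You relocate every branching point by a distance at most $\varepsilon$ and assert that ``a perturbation of order $\varepsilon$ at those branching points yields a $(1+O(\varepsilon))$-bi-Lipschitz distortion.'' This is false. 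If $p$ and $q$ lie in subtrees attached to a segment at branching points $b_p$ and $b_q$, then $d(p,q)=d(p,b_p)+d(b_p,b_q)+d(b_q,q)$, and moving $b_p,b_q$ by up to $\varepsilon$ changes this distance \emph{additively} by up to $2\varepsilon$; the multiplicative distortion is of order $1+\varepsilon/d(p,q)$, which is unbounded because points of $M$ can lie arbitrarily close to the segment being modified and to each other, and $\Br(M)$ typically accumulates at points of $M$. A uniform displacement bound therefore cannot work: the displacement of a branching point must be scaled by the distance from $M$ to the piece of segment containing it.

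This is precisely what drives the hierarchical construction in Section 3 of the paper. Each complementary interval $(x_k,y_k)$ of $M$ inside a segment is cut into a doubly infinite chain of subintervals $I_j$ bounded away from $M$; on each $I_j$ the hanging subtrees are re-attached with displacement $L=d(M,I_j)\cdot\varepsilon/3$ (Lemma~\ref{lm:separated_interval}), through an \emph{added} segment of length $L$, which is what yields the one-sided bound $d(p,q)\leq d(\psi(p),\psi(q))$ and keeps the result an $\RR$-tree; the tolerances $\varepsilon_j$ are further shrunk near the endpoints $x_k,y_k\in M$ where branching points may accumulate (Lemma~\ref{lm:open_interval}). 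Your sketch also does not address how the countably many arc-by-arc modifications are composed: one needs the product of the constants to stay below $1+\varepsilon$, one must guarantee that later modifications leave the already-processed segments untouched, and one must verify that the limit object does not reacquire a branching set with fat closure (this is the content of Claims~\ref{cl:claim1}--\ref{cl:claim4}). These are not cosmetic details; they are where the substance of the implication (ii)$\Rightarrow$(i) lies, so as written the proposal does not constitute a proof of that direction.
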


The proof of the 
implication (ii)$\Rightarrow$(i)
in Theorem~\ref{th:lipfree_tree_embedding} is constructive. It consists in perturbing $M$ iteratively with arbitrarily small distortion in such a way that the end result has its branching points confined into a closed negligible set. The proof of the other direction is based on a standard ultraproduct argument and Godard's theorem.

As a consequence, we obtain the equivalence of the Schur and Radon-Nikod\'ym properties for Lipschitz-free spaces over subsets of $\RR$-trees. Let us remark that they are known not to be equivalent in general for subspaces of $L_1$
(see e.g. \cite{BoRo_1980}), and that it is an open problem whether they are equivalent for all Lipschitz-free spaces.

In relation to this, we also prove that for complete, proper subsets of $\RR$-trees (i.e. such that all closed balls in $M$ are compact), being negligible is equivalent to $\lipfree{M}$ being a dual Banach space; see Theorem \ref{th:proper_trees}.

Finally, we extend a result of Kadets and Fonf \cite{KaFo_1983} about subspaces of $\ell_1$ to prove that all extreme points of the unit ball of a subspace of an $L_1$ space are always preserved, i.e. they are also extreme points of the bidual ball. Combining this with previous results by Aliaga and Perneck\'a \cite{AlPe_2019}, we completely characterize the extreme points of the unit ball of $\lipfree{M}$ for any subset $M$ of an $\RR$-tree (cf. Corollary \ref{cr:extreme_points_trees}).

Let us conclude this exposition with the obvious remark that $\RR$ is itself an $\RR$-tree, so all results in this paper apply in particular to closed subsets $M$ of $\RR$.


\subsection{Preliminaries}

Let us summarize the necessary background and notation used in this paper. Given a Banach space $X$, $B_X$ will be its closed unit ball. Let us remark that we will consider exclusively real scalars. We let $(M,d)$ be a metric space and if $M$ is pointed, its base point will be called $0$. We will denote by
\begin{align*}
d(x,A) &= \inf\set{d(x,a):a\in A} \\
d(A,B) &= \inf\set{d(a,b):a\in A,b\in B}
\end{align*}
the distance between a subset and either an element or another subset of $M$. The characteristic function of a set $A$ will be denoted by $\mathbf 1_A$.

We now introduce $\RR$-trees properly. An \emph{$\RR$-tree} is an arc-connected metric space $(T,d)$ with the property that there is a unique arc connecting any pair of points $x\neq y\in T$ and it moreover is isometric to the real segment $[0,d(x,y)]\subset\RR$. Such an arc, denoted $[x,y]$, is called a \emph{segment} of $T$ and it is immediate that it coincides with the metric segment
$$
[x,y]=\set{p\in T: d(x,p)+d(p,y)=d(x,y)} .
$$
Given a segment $I=[x,y]$, we will write $I^o=(x,y)$ for its interior.

A point $x\in T$ is called a \emph{leaf} of $T$ if $T\setminus\set{x}$ is connected, and it is called a \emph{branching point} of $T$ if $T\setminus\set{x}$ has at least three connected components. The set of all branching points of $T$ is denoted $\Br(T)$. If $T$ is a separable $\RR$-tree, then $\Br(T)$ is at most countable, and for each $b\in\Br(T)$ the set $T\setminus\set{b}$ has at most countably many connected components; see \cite{MaNiOv_1992} for reference.

The isometries $\phi_{xy}\colon [x,y]\rightarrow [0,d(x,y)]$ allow us to define an analog of the Lebesgue measure, called the \emph{length measure}, on $T$ as follows. Given an interval $[x,y]$ in $T$, let us say that a set $E\subset [x,y]$ is measurable if $\phi_{xy}(E)$ is Lebesgue measurable. Next define its length measure as $\lambda(E)=\lambda(\phi_{xy}(E))$, where $\lambda$ also denotes the Lebesgue measure on $\RR$ (which coincides with its length measure when considering $\RR$ as an $\RR$-tree). For an arbitrary $E\subset T$, let us say that $E$ is measurable if $E\cap I$ is measurable for any segment $I$ in $T$, and define its length measure as
$$
\lambda(E)=\sup\set{\sum_{k=1}^n \lambda(E\cap I_k): \text{$I_k$ are disjoint segments in $T$}} .
$$

It is well known that $\RR$-trees satisfy the following \emph{four point condition}: for any $x,y,z,w\in T$, the inequality
\begin{equation}
\label{eq:4pc}
d(x,y)+d(z,w)\leq\max\set{d(x,z)+d(y,w),d(y,z)+d(x,w)}
\end{equation}
holds. It is immediate that any subset of an $\RR$-tree also satisfies this condition. Conversely, any metric space $M$ satisfying the four point condition can be realized as a subspace of an $\RR$-tree \cite{Buneman_1974}. In fact, up to isometry there is a unique minimal $\RR$-tree containing $M$; we will denote it by $\conv(M)$. If $M$ is a subset of an $\RR$-tree $T$, then $\conv(M)$ may be realized as the union of the segments $[x,y]\subset T$ for $x,y\in M$, or alternatively as the union of the segments $[p,x]$, $x\in M$ for any fixed $p\in M$; in particular, if $M$ is separable then so is $\conv(M)$. We may then uniquely define the length measure on $M$ as the restriction to $M$ of the length measure on $\conv(M)$. We will also denote $\Br(M)=\Br(\conv(M))$; note that branching points of $M$ do not necessarily belong to $M$.

We will consider any $\RR$-tree $T$ to have a designated base point $0$ which we shall call its \emph{root}. This allows us to define a partial order $\preccurlyeq$ on $T$ by saying that $x\preccurlyeq y$ if $x\in [0,y]$. We will also use the notation $x\prec y$ to say that $x\preccurlyeq y$ and $x\neq y$. Given any two points $x,y\in T$, their order-theoretic meet $x\wedge y$ exists in $T$ with the property that, for any $z\in T$, $z\preccurlyeq x\wedge y$ if and only if $z\preccurlyeq x$ and $z\preccurlyeq y$; it is given by $[0,x\wedge y]=[0,x]\cap [0,y]$. If neither $x\prec y$ nor $y\prec x$ is true, then $x\wedge y\in\Br(T)$. Let us also mention that for any $p\in T$, the map $x\mapsto x\wedge p$ is continuous. Note that $\preccurlyeq$ induces (by restriction) a corresponding partial order on any subset $M\subset T$, with the difference that meets do not necessarily belong to $M$.

When considering subsets of $T$ (that contain 0) as pointed metric spaces, the base point will always be assumed to be the root. We will say that a mapping $\psi\colon M\rightarrow N$ between subsets of $\RR$-trees \emph{preserves the order} if $p\preccurlyeq q$ implies $\psi(p)\preccurlyeq\psi(q)$ for any $p,q\in M$. We will say that it is an \emph{order isomorphism} if moreover $\psi(p)\preccurlyeq\psi(q)$ also implies that $p\preccurlyeq q$. Notice that any root-preserving isometry between subsets of $\RR$-trees is an order isomorphism, since the order relation is completely determined by the metric and the choice of the root. In particular, if $x\prec y$ then $\phi_{xy}$ is an order isomorphism, and so it makes sense to consider infima and suprema of subsets of the segment $[x,y]\subset T$.

Finally, let us collect a few additional facts about $\RR$-trees and their subsets that will be useful later. We omit the simple proofs.

\begin{fact}
\label{fact:dense_br}
If $M$ is a subset of an $\RR$-tree and $A$ is a dense subset of $M$ then $\Br(A)=\Br(M)$.
\end{fact}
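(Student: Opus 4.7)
The plan is to prove both inclusions separately, using the essential property of the ambient $\RR$-tree $T$: for any $x,y\in T$, the segment $[x,y]$ is the unique arc connecting them, so $x$ and $y$ lie in different components of $T\setminus\set{b}$ if and only if $b\in [x,y]$. Both arguments proceed by locating three witness points in distinct components of $T\setminus\set{b}$ and transferring this configuration between $\conv(A)$ and $\conv(M)$.

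For the inclusion $\Br(\conv(A))\subseteq\Br(\conv(M))$, I would take $b\in\Br(\conv(A))$ and fix three distinct components of $\conv(A)\setminus\set{b}$. Since any point of $\conv(A)$ sits on some segment $[a,a']$ with $a,a'\in A$, and removing $b$ splits this segment into at most two subsegments, each of these components must contain a point $a_k\in A$ itself. Because the $a_k$ live in distinct components of $\conv(A)\setminus\set{b}$, the segment $[a_i,a_j]\subset T$ must pass through $b$ for every $i\neq j$; hence the $a_k$ also lie in distinct components of $T\setminus\set{b}$, and a fortiori of $\conv(M)\setminus\set{b}$. This direction uses only $A\subset M$, not density.

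The reverse inclusion is where density enters. Starting from $b\in\Br(\conv(M))$, the same component-labelling argument yields three points $m_1,m_2,m_3\in M$ in three distinct components of $\conv(M)\setminus\set{b}$. Since $m_k\neq b$, we have $d(m_k,b)>0$, and by density I can pick $a_k\in A$ with $d(a_k,m_k)<d(m_k,b)$. The point $a_k$ is then trapped in the component of $T\setminus\set{b}$ containing $m_k$: indeed, if $b$ were in $[m_k,a_k]$, then $d(m_k,a_k)=d(m_k,b)+d(b,a_k)\geq d(m_k,b)$, contradicting the choice of $a_k$. Consequently $a_1,a_2,a_3$ lie in three distinct components of $T\setminus\set{b}$, so $b\in [a_i,a_j]\subset\conv(A)$ for $i\neq j$, and the three segments $[b,a_k]$ pairwise meet only at $b$, witnessing $b\in\Br(\conv(A))$.

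The main subtlety lies in this second direction: density of $A$ in $M$ gives $a_k\to m_k$, but $b$ itself is typically not in $M$ and is not approached by any single sequence in $A$. It must instead be recovered as a Steiner point (median) of three approximants coming from three different ``arms'' at $b$, and the quantitative separation condition $d(a_k,m_k)<d(m_k,b)$ is precisely what keeps each approximant on the correct side of $b$ so that $b$ genuinely ends up inside $\conv(A)$.
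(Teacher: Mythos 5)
Your proof is correct and complete; the paper in fact omits this proof entirely (``we omit the simple proofs''), and your argument is exactly the standard one the authors presumably had in mind. Both halves check out: in the first direction each component of $\conv(A)\setminus\set{b}$ does contain a point of $A$ because every point of $\conv(A)$ lies on some $[a,a']$ with $a,a'\in A$ and $b$ cuts that segment into at most two pieces, each still attached to an endpoint; and in the second direction the quantitative choice $d(a_k,m_k)<d(m_k,b)$, combined with the fact that $b\in[x,y]$ exactly when $x,y$ lie in different components of $T\setminus\set{b}$, correctly keeps each $a_k$ in the component of $m_k$ and forces $b\in[a_i,a_j]\subset\conv(A)$.
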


\begin{fact}
\label{fact:order_preservation}
If $M,N$ are subsets of $\RR$-trees and $\psi\colon M\rightarrow N$ preserves the order, then $\psi(\conv(A))\subset\conv(\psi(A))$ for any $A\subset M$.
\end{fact}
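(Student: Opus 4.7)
The plan is a pointwise argument. Fix $x\in\conv(A)$ and show $\psi(x)\in\conv(\psi(A))$. Using the description of $\conv(A)$ as the union of segments $[a,b]$ with $a,b\in A$, pick such a pair with $x\in[a,b]$. Rooting the ambient tree and decomposing $[a,b]=[a\wedge b,a]\cup[a\wedge b,b]$, we may assume without loss of generality that $a\wedge b\preccurlyeq x\preccurlyeq a$; order preservation then immediately yields $\psi(x)\preccurlyeq\psi(a)$.

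Consequently $\psi(x)$ and $\psi(a)\wedge\psi(b)$ both lie on the segment $[0,\psi(a)]$ and are therefore comparable. If $\psi(a)\wedge\psi(b)\preccurlyeq\psi(x)$, then $\psi(x)\in[\psi(a)\wedge\psi(b),\psi(a)]\subset[\psi(a),\psi(b)]\subset\conv(\psi(A))$ and we are done. To exclude the alternative $\psi(x)\prec\psi(a)\wedge\psi(b)$, note that it forces $\psi(x)\prec\psi(b)$; on the other hand, from $a\wedge b\preccurlyeq x\preccurlyeq a$ together with $x\neq a\wedge b$ one sees that $x$ is strictly on the $a$-branch above the meet and is therefore incomparable with $b$. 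The converse direction of the order correspondence then produces the contradiction.

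The main obstacle is the corner case $x=a\wedge b$, together with the contrapositive step above: from bare order preservation one only derives $\psi(x)\preccurlyeq\psi(a)\wedge\psi(b)$, and a separate observation is needed to identify $\psi(a\wedge b)$ with $\psi(a)\wedge\psi(b)$ and rule out a strict inequality. Both this meet-preservation identity and the converse-order step are automatic once $\psi$ is an order isomorphism, which by the preliminaries holds for every root-preserving isometry between subsets of $\RR$-trees—the setting in which this fact will in practice be applied. Once that identification is in place, $\psi(x)=\psi(a)\wedge\psi(b)\in[\psi(a),\psi(b)]\subset\conv(\psi(A))$, completing the proof.
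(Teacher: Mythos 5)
The paper offers no proof of this Fact (it is one of those whose ``simple proofs'' are omitted), so your attempt can only be judged on its own merits, and there it has a genuine gap exactly at the corner case you flag. The assertion that an order isomorphism automatically satisfies $\psi(a\wedge b)=\psi(a)\wedge\psi(b)$ is false. Take $T$ a tripod with root $0$, center $m$ and leaves $a,b$, all edges of length $1$, and $M=\set{0,m,a,b}$; take $T'$ a tripod whose stem $[0',m']$ has length $2$ and carries an interior point $m''$ with $d(0',m'')=1$, with leaves $a',b'$ attached at $m'$, and $N=\set{0',m'',a',b'}$. The map $0\mapsto 0'$, $m\mapsto m''$, $a\mapsto a'$, $b\mapsto b'$ is an order isomorphism (both posets realize the same four-element tree order), yet $\psi(a\wedge b)=m''\prec m'=\psi(a)\wedge\psi(b)$, and indeed $\psi(m)=m''\notin [a',b']=\conv(\set{a',b'})$ while $m\in[a,b]=\conv(\set{a,b})$. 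So your corner case does not merely resist the intended argument: it shows that the Fact, read literally for an arbitrary $A\subset M$, fails even for order isomorphisms (and a fortiori for merely order-preserving maps, where your contrapositive step for $x\neq a\wedge b$ is unavailable as well).

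What saves the paper is that Fact~\ref{fact:order_preservation} is only ever invoked with $0\in A$ and $\psi$ root-preserving (with $A=M'$ and $A=\set{0}\cup\set{x_n:n\in\NN}$ in the proof of Theorem~\ref{tm:separable_tree_embedding}; note that there $\psi=\Psi$ on $Q$ is \emph{not} an order isomorphism, only order-preserving, so your proposed strengthening of the hypothesis would not even cover the actual application). In that situation meets never enter: $\conv(A)=\bigcup_{a\in A}[0,a]$, so any $x\in\conv(A)\cap M$ satisfies $x\preccurlyeq a$ for some $a\in A$, whence $\psi(x)\preccurlyeq\psi(a)$, i.e.\ $\psi(x)\in[0,\psi(a)]\subset\conv(\psi(A))$ because $0=\psi(0)\in\psi(A)$. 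This is presumably the ``simple proof'' the authors had in mind. The clean fix is therefore to add the hypothesis that $A$ contains the root and to decompose $\conv(A)$ from the root rather than as $\bigcup_{a,b\in A}[a,b]$, not to upgrade $\psi$ to an order isomorphism.
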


\begin{fact}
\label{fact:metricr_projection}
Let $T$ be a complete $\RR$-tree that is a subspace of an $\RR$-tree~$T'$.
Then there is a unique metric projection $\pi_T\colon T' \to T$. 
In particular if $I$ is a segment in an $\RR$-tree $T'$, then there is a metric projection (or 1-retraction) $\pi_I\colon T'\rightarrow I$.
\end{fact}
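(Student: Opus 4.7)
My plan is to construct $\pi_T$ using the median (Steiner point) of a triple of points in the ambient $\RR$-tree $T'$. The first thing I would recall is that given three points $a,b,c$ in any $\RR$-tree, there is a unique point $m$ lying simultaneously on the segments $[a,b]$, $[a,c]$, and $[b,c]$; equivalently, $d(a,b)=d(a,m)+d(m,b)$ and analogously for the other two pairs. This is a standard consequence of the four-point condition \eqref{eq:4pc} (or a direct computation with meets after choosing a root). Since $T\subset T'$ is itself an $\RR$-tree, for any $y_1,y_2\in T$ the segment $[y_1,y_2]$ taken in $T'$ coincides with the one taken in $T$ and therefore lies entirely inside $T$.

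Fix now $x\in T'$ and set $\alpha:=d(x,T)=\inf_{t\in T}d(x,t)$. For uniqueness I would argue as follows: if $y_1,y_2\in T$ both realize $\alpha$, let $m$ be the median of $x,y_1,y_2$ in $T'$. Then $m\in[y_1,y_2]\subset T$, so $d(x,m)\geq\alpha$, while the median identity $d(x,y_i)=d(x,m)+d(m,y_i)$ forces $d(m,y_i)=0$; consequently $y_1=y_2=m$.

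For existence I would pick a minimizing sequence $(t_n)\subset T$ and, for each pair $n,k$, form the median $p_{nk}$ of $x,t_n,t_k$ in $T'$. Exactly as above, $p_{nk}\in[t_n,t_k]\subset T$, so $d(x,p_{nk})\geq\alpha$, and summing the two identities $d(x,t_n)=d(x,p_{nk})+d(p_{nk},t_n)$ and $d(x,t_k)=d(x,p_{nk})+d(p_{nk},t_k)$ yields
$$
d(t_n,t_k)=d(x,t_n)+d(x,t_k)-2d(x,p_{nk})\leq d(x,t_n)+d(x,t_k)-2\alpha\xrightarrow[n,k\to\infty]{}0.
$$
So $(t_n)$ is Cauchy, and the completeness of $T$ produces a limit $y$ with $d(x,y)=\alpha$; I define $\pi_T(x):=y$.

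For the ``in particular'' clause, every segment $I$ in $T'$ is a closed subset isometric to a bounded interval of $\RR$, hence a complete $\RR$-tree in its own right, so the general statement applies. The only step I expect any friction on is the Cauchy estimate, and the median identity is precisely what pushes it through without needing any local compactness assumption on $T'$.
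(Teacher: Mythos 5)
Your argument for existence and uniqueness is correct, and it is the standard one (the paper omits the proof of this Fact altogether, so there is nothing to compare against in detail): the key observations --- that the $T'$-segment between two points of $T$ lies in $T$ because $T$ is itself an $\RR$-tree, and that the median of $x,t_n,t_k$ lands in $[t_n,t_k]\subset T$ and hence has distance at least $\alpha$ from $x$ --- are exactly what make the Cauchy estimate and the uniqueness argument work, with completeness of $T$ used only where it must be.

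The one thing you do not address is the parenthetical ``(or 1-retraction)'': the Fact also asserts, and the paper later uses (e.g.\ to get $\lipnorm{h\circ\phi_{xy}\circ\pi}\leq 1$ in the proof of Theorem~\ref{th:proper_trees}), that $\pi_T$ is nonexpansive. This follows from your median computation with two more lines. First, the \emph{gate property}: for any $t\in T$, if $m$ is the median of $x$, $y=\pi_T(x)$ and $t$, then $m\in[y,t]\subset T$ forces $d(x,m)\geq\alpha$, while $m\in[x,y]$ forces $d(x,m)\leq\alpha$; by uniqueness $m=y$, so $y\in[x,t]$. Hence for $x,x'\in T'$ with projections $y,y'$ one has $d(x,y')=d(x,y)+d(y,y')$ and $d(x',y)=d(x',y')+d(y,y')$; combining each with the triangle inequality ($d(x,y')\leq d(x,x')+d(x',y')$ and $d(x',y)\leq d(x',x)+d(x,y)$) and adding gives $2d(y,y')\leq 2d(x,x')$. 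With that addendum your proof is complete.
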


\begin{fact}
\label{fact:union_completion}
The nested union of $\RR$-trees, the intersection of $\RR$-trees and the completion of an $\RR$-tree are again $\RR$-trees.
\end{fact}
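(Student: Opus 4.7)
The plan is to reduce all three assertions to checking geodesicity together with the four point condition \eqref{eq:4pc}, which jointly characterize $\RR$-trees. To justify this reduction I would first record the equivalence: one direction is clear from the excerpt, since $\RR$-trees are geodesic and satisfy \eqref{eq:4pc}. For the converse, a geodesic space $T$ satisfying \eqref{eq:4pc} embeds isometrically into some $\RR$-tree $\tilde T$ by the result cited from \cite{Buneman_1974}; since $T$ is geodesic and each $\tilde T$-segment $[x,y]$ (for $x,y\in T$) coincides with the ambient metric set $\{p:d(x,p)+d(p,y)=d(x,y)\}$, that segment must already lie inside $T$, so $T$ inherits the $\RR$-tree structure. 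I would also note that in any metric space an isometric copy of $[0,d(x,y)]$ joining $x$ and $y$ is forced to equal this metric set, so uniqueness of the connecting arc is automatic.

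With this characterization in hand, the nested union $T=\bigcup_\alpha T_\alpha$ would be treated as follows: directedness of the family under inclusion gives, for every finite $F\subset T$, a single $T_\alpha$ containing $F$; applied to pairs this produces an isometric geodesic inside $T$, and applied to quadruples it transfers \eqref{eq:4pc} from the individual $T_\alpha$ up to $T$. The intersection $T=\bigcap_\alpha T_\alpha$ inside a common ambient metric space would be handled similarly: \eqref{eq:4pc} is inherited at once, and the $T_\alpha$-segment between any $x,y\in T$ equals the ambient metric set $\{p:d(x,p)+d(p,y)=d(x,y)\}$, which depends only on the ambient metric and is therefore the same for every $\alpha$, hence lies in the intersection; this gives geodesicity.

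For the completion $\overline T$, the four point condition \eqref{eq:4pc} would pass to $\overline T$ by continuity of $d$. For geodesicity I would, given $\bar x,\bar y\in\overline T$, pick sequences $x_n\to\bar x$ and $y_n\to\bar y$ in $T$ and consider the isometric parametrizations $\gamma_n\colon[0,d(x_n,y_n)]\to T$ of the segments $[x_n,y_n]$. A short estimate via \eqref{eq:4pc} (or equivalently via composition with the metric projections supplied by Fact~\ref{fact:metricr_projection} onto suitable subtrees) should show that $(\gamma_n)$ is uniformly Cauchy in its parameter, so it converges to an isometric parametrization of a geodesic from $\bar x$ to $\bar y$ in $\overline T$. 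This Cauchy estimate for the parametrizations is the only step that requires any real computation; every other ingredient amounts to bookkeeping with \eqref{eq:4pc} and the purely metric characterization of segments.
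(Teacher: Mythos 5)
The paper gives no argument for this fact (``we omit the simple proofs''), so there is nothing of the authors' to compare against. Your reduction to the characterization ``geodesic $+$ four point condition \eqref{eq:4pc}'', justified via Buneman's embedding, is a standard and serviceable route, and your treatment of the nested union and of the completion is essentially correct --- in particular you rightly isolate the uniform Cauchy estimate for geodesics with converging endpoints as the only step with real content, and that estimate does hold in a $0$-hyperbolic space (e.g.\ $d(\gamma_n(t),\gamma_m(t))\leq d(x_n,x_m)+d(y_n,y_m)$ for arclength parametrizations from the $x$-endpoints, checked on the finite subtree spanned by the four endpoints).

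Two points need repair. First, the side remark that in \emph{any} metric space an isometric copy of $[0,d(x,y)]$ joining $x$ and $y$ must equal the metric set $\set{p:d(x,p)+d(p,y)=d(x,y)}$ is false: such a copy is always \emph{contained} in that set, but in $\ell_1^2$ the metric set between $(0,0)$ and $(1,1)$ is the whole unit square, which contains many isometric copies of $[0,2]$. (Equality does follow from \eqref{eq:4pc} applied to $x,y,p,q$, which shows that $p\mapsto d(x,p)$ is an isometry of the metric set into $[0,d(x,y)]$; and your Buneman argument for uniqueness of arcs does not actually rely on the faulty remark, so this is cosmetic.) Second, and more substantively, in the intersection case the identity ``$T_\alpha$-segment $=$ ambient metric set'' is exactly where a hypothesis on the ambient space must enter, and for an arbitrary common ambient metric space the assertion of the Fact is simply false: in $\ell_1^2$ the sets $([0,1]\times\set{0})\cup(\set{1}\times[0,1])$ and $(\set{0}\times[0,1])\cup([0,1]\times\set{1})$ are each isometric to $[0,2]$, hence $\RR$-trees, yet their intersection is the two-point set $\set{(0,0),(1,1)}$, which is disconnected. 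The statement has to be read, as the paper in fact uses it, as concerning subtrees of a common $\RR$-tree; under that reading the ambient metric set coincides with the ambient segment, your identification goes through, and the intersection is geodesically convex, hence an $\RR$-tree.
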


\begin{fact}\label{fact:union}
Let $(T_i,d_i)$, $i\in I$, be complete $\RR$-trees such that $\bigcap_{i\in I}T_i\neq \varnothing$ and $d_i=d_j$ on $T_i \cap T_j$ for all $i,j \in I$.
We define a metric $d$ on $T'=\bigcup T_i$ by
$$
d(x,y)=d_i(x,\pi_{ij}(x))+d_i(\pi_{ij}(x),\pi_{ij}(y))+d_j(\pi_{ij}(y),y)
$$
where $i,j\in I$ are such that $x\in T_i$ and $y \in T_j$, and $\pi_{ij}=\pi_{T_i\cap T_j}$.
Then $d$ is well defined, $(T',d)$ is an $\RR$-tree and each $(T_i,d_i)$ is a metric subspace of $(T',d)$. 
\end{fact}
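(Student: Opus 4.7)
The plan is to rewrite the defining formula in a rooted form that makes its intrinsic meaning transparent. Fix $p\in\bigcap_{i\in I}T_i$ and regard each $T_i$ as rooted at $p$. By Fact~\ref{fact:union_completion} each intersection $T_i\cap T_j$ is an $\RR$-tree, complete because the $T_k$ are, so the projection $\pi_{ij}$ of Fact~\ref{fact:metricr_projection} is well defined. The hypothesis $d_i=d_j$ on $T_i\cap T_j$ together with the uniqueness of geodesics forces, for $x\in T_i\cap T_j$, the equality $[p,x]_{T_i}=[p,x]_{T_j}$ (both coincide with the unique geodesic inside the subtree $T_i\cap T_j$); hence the segment $[p,x]\subset T'$ and the height $h(x):=d_i(p,x)$ depend only on $x$. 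A short computation with meets in $T_i\cap T_j$ identifies $\pi_{ij}(x)$ as the highest point of $[p,x]$ still lying in $T_i\cap T_j$, and the formula in the statement then collapses to
$$
d(x,y)\;=\;h(x)+h(y)-2\,h\bigl(w(x,y)\bigr),
$$
where $w(x,y):=\max\bigl([p,x]\cap[p,y]\bigr)$. Since the right-hand side depends only on $x$ and $y$, this already proves that $d$ is well defined.

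Symmetry and positivity are immediate from this reformulation, and $d(x,y)=0$ forces $x=w(x,y)=y$. For the triangle inequality, note that $w(x,y)$ and $w(y,z)$ both lie on the ray $[p,y]$ and hence are comparable; the smaller of them automatically belongs to $[p,x]\cap[p,z]$, so it is $\preccurlyeq w(x,z)$, which gives $h(w(x,y))+h(w(y,z))\le h(y)+h(w(x,z))$, equivalent to $d(x,z)\le d(x,y)+d(y,z)$. The four-point condition~\eqref{eq:4pc}, in the $h$-variables, reduces to the ultrametric statement that among the three numbers $h(w(a,b))$, $h(w(a,c))$, $h(w(b,c))$ the minimum is attained at least twice. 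This follows by a short case analysis: whenever $w(a,b)\preccurlyeq w(a,c)$ on $[p,a]$, the point $w(a,b)$ also belongs to $[p,c]$ and is therefore $\preccurlyeq w(b,c)$; a symmetric argument on $[p,c]$ then yields the required coincidence.

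To finish, $(T',d)$ is geodesic: for $x\in T_i$, $y\in T_j$ with $w=w(x,y)$, the concatenation of the $T_i$-segment $[x,w]$ and the $T_j$-segment $[w,y]$ has length $d(x,y)$, and the two halves share only the point $w$, because above the projections $\pi_{ij}(x),\pi_{ij}(y)$ they lie respectively in $T_i\setminus T_j$ and $T_j\setminus T_i$ (by maximality), while below them they sit inside $T_i\cap T_j$ and meet exactly at $w=\pi_{ij}(x)\wedge\pi_{ij}(y)$. The concatenation is thus isometric to $[0,d(x,y)]$, and combined with the four-point condition this shows that $(T',d)$ is an $\RR$-tree. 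The subspace property is immediate: taking $i=j$ in the defining formula yields $\pi_{ii}=\mathrm{id}_{T_i}$ and hence $d|_{T_i\times T_i}=d_i$. The main obstacle in this argument is the well-definedness of $d$, which I resolve through the intrinsic description of $w(x,y)$; once this is in place, every subsequent verification is a routine application of standard $\RR$-tree techniques to each $T_i$ and to $T_i\cap T_j$.
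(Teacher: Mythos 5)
The paper itself offers no proof of this Fact---it is listed among the preliminaries whose ``simple proofs'' are omitted---so there is no argument of the authors to set yours against; I can only assess the proposal on its own terms. Its core is sound: fixing $p\in\bigcap_{i}T_i$, showing that the segment $[p,x]$ and the height $h(x)=d_i(p,x)$ are independent of the index, identifying $\pi_{ij}(x)$ as the exit point of $[p,x]$ from $T_i\cap T_j$, and collapsing the defining formula to $d(x,y)=h(x)+h(y)-2h(w(x,y))$ with $w(x,y)=\max([p,x]\cap[p,y])$ is exactly the right way to dispose of well-definedness, which is indeed the only delicate point. Your verifications of the metric axioms, of geodesicity via the concatenation $[x,w]\cup[w,y]$, and of the subspace property (taking $i=j$) are all correct.

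The one step that is not right as written is the treatment of the four-point condition. The inequality \eqref{eq:4pc} for a general quadruple $\set{a,b,c,e}$ translates, in the $h$-variables, into the six-product inequality $h(w(a,b))+h(w(c,e))\ge\min\set{h(w(a,c))+h(w(b,e)),\,h(w(b,c))+h(w(a,e))}$; the three-term ``minimum attained twice'' statement you prove is only the special case $e=p$, i.e.\ it establishes \eqref{eq:4pc} only for quadruples containing the root. This is a genuine gap, though an easily repaired one: either invoke the standard fact that $0$-hyperbolicity with respect to one basepoint implies $0$-hyperbolicity with respect to every basepoint (the $\delta=0$ case of the basepoint-change lemma), which yields the full four-point condition; or run a direct case analysis on the six products, using that all the meets $w(\cdot,a)$ involving a fixed point $a$ lie on $[p,a]$ and are therefore pairwise comparable; or bypass \eqref{eq:4pc} altogether by observing that your description exhibits $T'$ as a union of segments $[p,x]$, any two of which intersect in a common initial segment, which is the standard presentation of a rooted $\RR$-tree. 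One of these bridges should be added before concluding that $(T',d)$ is an $\RR$-tree.
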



\section{Lipschitz free spaces over negligible subsets of \texorpdfstring{$\RR$}{R}-trees}

\subsection{Isometric embeddings into \texorpdfstring{$\ell_1(\Gamma)$}{l1(Gamma)}}


Let us start with the full solution to Question~\ref{q:embedding_into_l1}. For the convenience of the reader we restate the result here. 
\begin{T2}
Let $M$ be a complete metric space. Then the following are equivalent:
\begin{enumerate}[leftmargin=1cm, label={\upshape{(\roman*)}}]
\item $\lipfree{M}$ is isometrically isomorphic to a subspace of 
$\ell_1(\Gamma)$ for some set $\Gamma$,
\item $M$ is a subset of an $\RR$-tree such that $\lambda(M)=0$ and $\lambda(\cl{\Br(M)})=0$.
\end{enumerate}
\end{T2}

For a compact metric space $M$ which is a subset of an $\RR$-tree, $\lambda(M)=0$ already implies $\lambda(\cl{\Br(M)})=0$ by~\cite[Lemma 7]{DaKaPr_2016}.
Also, there are easy examples of proper 
$M$ such that $\lambda(M)=0$ and $\lambda(\cl{\Br(M)})>0$ (see Example~\ref{ex:dense_branching} below).
Finally, let us mention that general results concerning subspaces of $L_1$ which are isometric to subspaces of $\ell_1$ appear in~\cite{DeJaPe_1998}.
We prefer to provide a direct self-contained proof here.

\begin{proof}[Proof of Theorem \ref{t:char_of_embeddability}]
The implication
(ii)$\Rightarrow$(i)
follows immediately from Godard's work \cite[Theorem 3.2]{Godard_2010}. We now turn to
(i)$\Rightarrow$(ii).
If $\lipfree{M}$ embeds isometrically into $\ell_1(\Gamma)$ for some set $\Gamma$, then it follows again from Godard's work that $M$ is a subset of an $\RR$-tree such that $\lambda(M)=0$. 
So we will assume that $\lambda(\cl{\Br(M)})>0$ and then show that $\Free(M)$ does not embed linearly isometrically into any $\ell_1(\Gamma)$. To achieve this goal, we are going to present a two-dimensional subspace of $\mathcal F(M)$ that does not embed linearly isometrically into $\ell_1$ (and so not into any $\ell_1(\Gamma)$ either).
First, observe a particularity of 2-dimensional subspaces of $\ell_1$:

\begin{lemma}
Let $u,v \in \ell_1\setminus \set{0}$. Then the second distributional derivative of  $\mathbb R\ni t \mapsto \norm{u-tv}_1$ is a discrete measure (concentrated on the countable set $\set{\frac{u_i}{v_i}:i \in \NN, v_i\neq 0}$).
\end{lemma}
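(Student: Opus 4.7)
The strategy is to decompose $g(t) := \|u - tv\|_1 = \sum_{i} |u_i - t v_i|$ termwise, compute each term's second distributional derivative, and then pass to the limit in the sense of distributions.

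First, I would handle a single coordinate. For each $i$ with $v_i \neq 0$, the function $t \mapsto |u_i - t v_i| = |v_i| \cdot |t - u_i/v_i|$ is convex and piecewise affine, and a direct computation (integration by parts against a test function) gives that its second distributional derivative equals $2|v_i|\,\delta_{u_i/v_i}$. For $i$ with $v_i = 0$ the term is the constant $|u_i|$, whose second derivative is the zero distribution.

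Next, I would justify termwise summation. The partial sums $g_n(t) := \sum_{i=1}^n |u_i - t v_i|$ are convex functions, and the estimate $|u_i - t v_i| \leq |u_i| + |t|\,|v_i|$ together with the Weierstrass M-test (using $u,v \in \ell_1$) shows that $g_n \to g$ uniformly on every compact subset of $\mathbb{R}$. Uniform convergence on compact sets implies convergence in $\mathcal{D}'(\mathbb{R})$, and since distributional differentiation is continuous on $\mathcal{D}'(\mathbb{R})$, we get $g_n'' \to g''$ in $\mathcal{D}'(\mathbb{R})$.

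Finally, combining these two observations, we obtain
\[
g'' = \sum_{i:\, v_i \neq 0} 2|v_i|\, \delta_{u_i/v_i},
\]
the limit being taken in $\mathcal{D}'(\mathbb{R})$. The right-hand side is a positive Borel measure of total mass $2\|v\|_1 < \infty$, supported on the countable set $\{u_i/v_i : v_i \neq 0\}$, hence purely atomic (i.e.\ discrete) as claimed.

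There is no real obstacle here; the only point requiring a little care is the interchange of the infinite sum with the second distributional derivative, which is justified cleanly by locally uniform convergence of the partial sums rather than by any more delicate dominated-convergence argument.
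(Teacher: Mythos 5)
Your proof is correct and is precisely the ``standard easy proof'' that the paper leaves to the reader: decompose $\norm{u-tv}_1$ termwise, note that each term $|v_i|\,|t-u_i/v_i|$ contributes $2|v_i|\,\delta_{u_i/v_i}$, and justify the interchange of sum and distributional derivative via locally uniform convergence of the convex partial sums. Nothing to add.
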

\noindent
The standard details of the easy proof are left to the reader. 

Now, since we assume that $\lambda(\cl{\Br(M)})>0$, 
there exists a segment $[a,b] \subset \conv(M)$ such that $\lambda([a,b]\cap \cl{\Br(M)}
)>0$.
Without loss of generality, we may assume that $0\preccurlyeq a\prec b$.

Let $\pi:\conv(M) \to [a,b]$ be the metric projection onto $[a,b]$.
Note that $\pi(\Br(M))\cap (a,b) \subset \Br(M)$.
Thus we can find a sequence $(q_n)_{n=1}^\infty \subset \Br(M)\cap (a,b)$ dense in $\cl{\Br(M)}\cap (a,b)$.

Since $b\in \conv(M)$, there exists $x\in M$ such that $b\preccurlyeq x$. 
We set $v=\delta(x)$.
Further, for each $n\in\NN$ there exists $x_n\in M$ such that $\set{q_n}=\Br(\set{0,x_n,x})$.
We set \[
u=\sum_{n=1}^\infty \frac{\delta(x_n)}{2^n d(0,x_n)}.
\]

To finish the proof, it is enough to show that the second distributional derivative of $t \mapsto \norm{u-tv}$ has a continuous part.
It follows from Godard's embedding that
\[
\norm{u-tv}=\int_0^{d(b,0)}\abs{f(s)-t
}\,ds + \sum_{n=1}^\infty \frac{d(q_n,x_n)}{2^n d(0,x_n)}+\abs{t}d(x,b)
\]
for every $t\in\RR$, where
$$
f=\sum_{n=1}^\infty \frac{1}{2^nd(0,x_n)}\mathbf 1_{[0,d(0,q_n)]} \in L_1[0,d(0,b)].
$$
For every test function $\varphi \in \mathcal D(\RR)$ we have 
\[
\begin{aligned}
\int_{\mathbb R}\pare{\int_0^{d(0,b)}|f(s)-t|\,d s} \varphi''(t) \,dt &= \int_0^{d(0,b)}\pare{\int_{\mathbb R} |f(s)-t|\varphi''(t)\,dt}ds\\
&=\int_0^{d(0,b)} 2\varphi(f(s))\,ds=\int_{\mathbb R}2\varphi(s)\,d\nu_f(s)
\end{aligned}
\]
where $\nu_f(B)=\lambda(f^{-1}(B))$ for all Borel subsets $B$ of $\mathbb R$, i.e. $\nu_f$ is the push-forward by $f$ of the Lebesgue measure.
Thus the second distributional derivative of $t \mapsto \norm{u-tv}$ is given by $2\nu_f$.
The set $[0,d(0,b)]\setminus \cl{\set{d(0,q_n):n \in \NN}}$ is a countable union of relatively open intervals $(I_n)_{n=1}^\infty$.
It is clear that for $A=[0,d(0,b)] \setminus \bigcup_{n=1}^\infty \cl{I_n}$ we have $\lambda(A)>0$ and it follows from the definition of $f$ and $A$ that $f^{-1}(\set{f(a)})=\set{a}$ for every $a\in A$. 
Thus $\nu_f$ is not discrete. 
\end{proof}

The following example satisfies the negation of the equivalent conditions stated in Theorem~\ref{t:char_of_embeddability}.

\begin{example}
\label{ex:dense_branching}
Let $(q_n)_{n=1}^\infty$ be an enumeration of $\QQ\cap (0,1)$, and let $T$ be an $\RR$-tree consisting of a segment $S$ of length 1, with the root at one of its ends, and a sequence of segments $(B_n)_{n=1}^\infty$ such that $B_n$ has length $n$ and is attached to $S$ at a distance $q_n$ from the root. Now let $M$ consist of the leaves of $T$. Clearly $M$ is closed, proper (its bounded subsets are finite), and countable, hence $\lambda(M)=0$. However $\Br(M)$ is dense in $S$ and so $\lambda(\cl{\Br(M)})=1$.

It thus follows from Theorem~\ref{t:char_of_embeddability} that $\F M$ does not embed isometrically into $\ell_1$. Nevertheless, it embeds almost isometrically into $\ell_1$. This follows from our Theorem~\ref{th:lipfree_tree_embedding} but we choose here to provide a very short proof based on the properness of $M$. By \cite[Proposition 4.3]{Kalton_2004}, the space $\lipfree{M}$ is linearly $(1+\varepsilon)$-isomorphic to a subspace of the $\ell_1$-sum of the spaces $\lipfree{M_k}$, $k\in\ZZ$, where $M_k=\set{x\in M: d(0,x)\leq 2^k}$. But $M_k$ are negligible compact sets, hence each $\lipfree{M_k}$ is isometric to a subspace of $\ell_1$ by the results in \cite{DaKaPr_2016} and this is enough.

If we let all segments $B_n$ have a constant length instead, a similar example is obtained where $M$ is bounded but not proper.
\end{example}

\subsection{Almost-isometric embeddings into \texorpdfstring{$\ell_1$}{l1}}

We now turn to the second main result. 
The proof will follow from two more general results where $M$ is not necessarily negligible. The first one is a version of the inverse direction of the equivalence stated in Theorem \ref{th:lipfree_tree_embedding}:

\begin{proposition}\label{p:ultraproduct}
Let $M$ be a pointed metric space such that for every $\varepsilon>0$ there is a measure space $(\Omega,\Sigma,\mu)$ such that $\Free(M)$ is $(1+\varepsilon)$-isomorphic to a subspace of $L_1(\mu)$.
Then $M$ is a subset of an $\RR$-tree.
\end{proposition}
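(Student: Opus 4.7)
The plan is to reduce to Godard's theorem (stated in the introduction of the paper) via an ultraproduct argument, which is also mentioned by the authors as the method they use.

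First, I would pick a sequence $\varepsilon_n \searrow 0$ and, for each $n$, a linear map $T_n \colon \Free(M) \to L_1(\mu_n)$ realizing the $(1+\varepsilon_n)$-isomorphism onto its image. After rescaling, I may assume
\[
(1+\varepsilon_n)^{-1}\norm{f} \leq \norm{T_n f}_{L_1(\mu_n)} \leq \norm{f}
\]
for every $f \in \Free(M)$. Fix a free ultrafilter $\mathcal U$ on $\NN$ and form the Banach space ultraproduct $L = (L_1(\mu_n))_{\mathcal U}$. Define
\[
T\colon \Free(M) \to L, \qquad T(f) = [(T_n f)_n]_{\mathcal U} .
\]
The uniform bound $\norm{T_n f} \leq \norm{f}$ makes this well defined and linear, and
\[
\norm{T f}_L = \lim_{n \to \mathcal U} \norm{T_n f} = \norm{f}
\]
because $\varepsilon_n \to 0$, so $T$ is a linear isometric embedding.

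Next I would invoke the classical theorem of Heinrich (going back to Dacunha-Castelle and Krivine) that an ultraproduct of $L_1$-spaces is isometrically isomorphic to an $L_1(\nu)$-space for some measure space $(\Omega',\Sigma',\nu)$. Composing $T$ with this isometric identification gives a linear isometric embedding of $\Free(M)$ into $L_1(\nu)$.

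Finally, I would apply Godard's theorem (cited in the introduction as \cite{Godard_2010}): linear isometric embeddability of $\Free(M)$ into some $L_1$-space is equivalent to $M$ being isometrically embeddable into an $\RR$-tree. Since being (isometric to) a subset of an $\RR$-tree is manifestly isometry-invariant, this yields the conclusion.

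The only step with any real content is the identification of the ultraproduct of $L_1$-spaces as an $L_1$-space; everything else is a routine unpacking of the $(1+\varepsilon)$-isomorphism assumption together with Godard's characterization. No use of the four-point condition is needed directly, since it is already built into Godard's theorem.
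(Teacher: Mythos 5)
Your proposal is correct and follows essentially the same route as the paper: normalize the $(1+\varepsilon_n)$-embeddings, pass to the ultraproduct of the $L_1(\mu_n)$ along a free ultrafilter to get a linear isometric embedding, invoke Heinrich's theorem that an ultraproduct of $L_1$-spaces is an $L_1$-space, and conclude by Godard's characterization. The only (immaterial) difference is the direction of your normalization of the $T_n$.
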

\begin{proof}
For every $n \in \NN$, let $\mu_n$ be a measure such that there is a linear operator $T_n:\Free(M) \to L_1(\mu_n)$ satisfying
\[
\norm{u}\leq \norm{T_n(u)}_{L_1} \leq \left(1+\frac1n\right)\norm{u}
\]
for every $u\in \Free(M)$.
Let $\mathcal U$ be a non-principal ultrafilter on $\NN$ and let $(L_1(\mu_n))_{\mathcal U}$ be the corresponding ultraproduct of the spaces $L_1(\mu_n)$.
It is a well known fact that any ultraproduct of $L_1$-spaces is an $L_1$-space. (see~\cite[Theorem 3.3]{Heinrich_1980}).
We define $T(x)=[(T_n(x))_n] \in (L_1(\mu_n))_{\mathcal U}$.
It is standard to check that $T$ is a linear isometric embedding.
Thus, applying~\cite[Theorem 4.2]{Godard_2010} we get the conclusion.
\end{proof}

For the opposite direction, we will actually prove a more general result which shows that separable subsets $M$ of $\RR$-trees may be distorted with an arbitrarily small Lipschitz constant in order to concentrate the closure of its branching points around $M$ up to a negligible set. The precise statement follows:

\begin{theorem}
\label{tm:separable_tree_embedding}
Let $M$ be a complete separable metric space that is a subset of an $\RR$-tree. Then, for every $\varepsilon>0$, $M$ is $(1+\varepsilon)$-Lipschitz homeomorphic to a subset $N$ of an $\RR$-tree such that $\lambda(N)=\lambda(M)$ and $\lambda(\cl{\Br(N)}\setminus N)=0$, where the closure is taken in $\conv(N)$.
\end{theorem}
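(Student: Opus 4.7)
Since $M$ is separable and complete, $\conv(M)$ is a separable $\RR$-tree, and $\Br(M)=\Br(\conv(M))$ is at most countable. Enumerate the branching points not in $M$ as $\{b_n\}_{n\in\NN}$ (observing $d(b_n,M)>0$ by completeness of $M$), and fix summable weights $\varepsilon_n>0$ with $\prod_n(1+\varepsilon_n)\le 1+\varepsilon$. The plan is to iteratively perturb $M$, addressing one branching point at a time, producing a sequence $(M_n)_{n\ge 0}$ of subsets of $\RR$-trees with $M_0=M$, whose limit $N$ is the desired space.

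The key local operation uses Fact~\ref{fact:union}. At step $n$, consider the current image of $b_n$ in $\conv(M_{n-1})$: let $a=d(b_n,M_{n-1})$ and let $x\in M_{n-1}$ be nearest to $b_n$, lying in some branch $B_1$ at $b_n$. For a small parameter $\eta\in(0,a)$, we detach every other branch at $b_n$ and re-glue it at the point $b_n'\in[b_n,x]$ with $d(b_n,b_n')=\eta$. Fact~\ref{fact:union} ensures the result is an $\RR$-tree, a direct case analysis on pairs of points in $M_{n-1}$ shows the induced bijection $M_{n-1}\to M_n$ is bi-Lipschitz with distortion at most $(1-\eta/(2a))^{-1}$, and length measure is preserved. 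Choosing $\eta$ compatibly with $\varepsilon_n$ one can simultaneously bound the distortion and move the relocated branching point onto a preselected $\lambda$-null countable ``target grid'' $G\subset\conv(M)$, designed to be fine near the points of $M$ but sparse elsewhere (so that its closure in $\conv(M)$ meets $\conv(M)\setminus M$ only on a $\lambda$-null set).

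By summability of the distortions, the composition of these operations converges to a bi-Lipschitz bijection $\Psi\colon M\to N$ with distortion $\le 1+\varepsilon$; Fact~\ref{fact:union_completion} places $N$ as a subset of an $\RR$-tree, and length measure is preserved throughout, so $\lambda(N)=\lambda(M)$. In the limit tree, every branching point of $\conv(N)$ lies either in $N$ or in the image of the countable $\lambda$-null set $G$, so $\cl{\Br(N)}\subseteq \Br(N)\cup\cl{N}=\Br(N)\cup N$ (using completeness of $N$), and hence $\cl{\Br(N)}\setminus N\subseteq\Br(N)\setminus N$ is countable, and therefore $\lambda$-null. The main obstacle is the joint scheduling of the slide parameters $\eta_n$ and the grid scale: each slide modifies the global tree metric and thereby affects other branching points, so one must show that the cumulative effect stays within the $1+\varepsilon$ distortion budget while successfully depositing each processed branching point onto $G$ in the limit.
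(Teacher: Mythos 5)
There is a genuine gap, and it is exactly the point you flag as ``the main obstacle'': the joint scheduling of the slide parameters and the grid cannot be made to work if you process one branching point at a time. Your distortion analysis forces $\eta_n\lesssim \varepsilon_n\, d(b_n,M)$, and since $\sum_n\varepsilon_n<\infty$ you have $\varepsilon_n\to 0$, so all but finitely many branching points can only be displaced by an amount tending to $0$. Test this against Example~\ref{ex:dense_branching}: the branching points $q_n$ are dense in a unit segment $S$ whose interior is at bounded distance from $M$ only via its endpoints, so $d(q_n,M)\le 1/2$ and each $q_n$ moves by at most $O(\varepsilon_n)$. The perturbed branching set is then still dense in $S$, its closure still has measure $1$, and no choice of a countable grid $G$ with $\lambda(\cl G\setminus M)=0$ is reachable within these budgets (a grid that is discrete away from $M$ requires displacements bounded below, which blows the budget; a grid dense in $S$ has non-null closure). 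Relatedly, the inference $\cl{\Br(N)}\subseteq\Br(N)\cup N$ is unjustified: accumulation points of $\Br(N)$ need not be branching points or points of $N$, and controlling exactly this closure is the whole difficulty of the theorem. Two smaller issues: your slide is contractive, so preservation of $\lambda$ under the infinite composition needs an argument (the paper's rearrangements are expansive precisely so that $\lambda(\psi(M))\ge\lambda(M)$ can be proved and then matched from above); and passing any of these properties to the limit of infinitely many operations is nontrivial.

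The paper's resolution is to change the unit of processing from a single branching point to a whole segment: for $[x,y]$ with $[x,y]\cap M=\varnothing$ it detaches \emph{all} branches hanging off $(x,y)$ simultaneously, reattaches them at a finite $L$-net $\{z_k\}$ of $[x,y]$ via added stubs of length $L=d(M,[x,y])\varepsilon/3$, so that the relevant distortion scale is $d(M,[x,y])$ (bounded below for the whole segment) rather than the individual $d(b_n,M)$, and the added stubs keep the map expansive. This consolidates infinitely many branching points into finitely many per subsegment; the segments are then exhausted along $[0,x_n]$ for a dense sequence $(x_n)$ in $M$, and separate claims show that the limit's branching set accumulates only on $N$ up to a countable set. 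If you want to salvage your scheme, you would need to group the branching points by segments of $\conv(M)\setminus M$ in this way; as written, the one-at-a-time plan fails on the very examples the theorem must handle.
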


The proof of Theorem \ref{tm:separable_tree_embedding} will be provided in the next section. It is straightforward to obtain Theorem \ref{th:lipfree_tree_embedding} as a consequence. For convenience of the reader, we recall here the statement:

\begin{T3}
Let $M$ be a complete separable metric space. Then the following are equivalent:
\begin{enumerate}[leftmargin=1cm, label={\upshape{(\roman*)}}]
\item $\lipfree{M}$ is $(1+\varepsilon)$-isomorphic to a subspace of
$\ell_1$ for every $\varepsilon>0$,
\item $M$ is a subset of an $\RR$-tree such that $\lambda(M)=0$.
\end{enumerate}
\end{T3}

\begin{proof}
Assume (i). Then, on one hand, applying Proposition~\ref{p:ultraproduct}, we know that $M$ is a subset of an $\RR$-tree. And on the other hand, $M$ must be negligible as $\ell_1$ does not contain $L_1$. Thus (ii) follows.

Now assume (ii), fix $\varepsilon>0$ and let $N$ be the metric space given by Theorem~\ref{tm:separable_tree_embedding}. Then $\lipfree{M}$ is linearly $(1+\varepsilon)$-isomorphic to $\lipfree{N}$, which is linearly isometric to a subspace of $\lipfree{N \cup \cl{\Br(N)}}$. Now $\lipfree{N \cup \cl{\Br(N)}}$ is linearly isometric to $\ell_1$ by \cite[Corollary 3.4]{Godard_2010} as both $N$ and $\cl{\Br(N)}$ are negligible and closed in $\conv(N)$. Hence we get (i).
\end{proof}

As a consequence of Theorem \ref{th:lipfree_tree_embedding} we obtain the following: 

\begin{corollary}\label{c:RNPSchurL1}
Let $M$ be a closed subset of an $\RR$-tree. Then the following are equivalent:
\begin{enumerate}[label={\upshape{(\roman*)}}]
\item $\lambda(M)=0$,
\item $\lipfree{M}$ has the Schur property,
\item $\lipfree{M}$ has the Radon-Nikod\'ym property.
\item $\lipfree{M}$ does not contain an isomorphic copy of $L_1$.
\end{enumerate}
\end{corollary}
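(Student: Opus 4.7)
My plan is to prove the cycle (i) $\Rightarrow$ (ii), (iii) $\Rightarrow$ (iv) $\Rightarrow$ (i). The engine is Theorem~\ref{th:lipfree_tree_embedding}: when $M$ is separable and $\lambda(M)=0$, it yields an isomorphic embedding of $\Free(M)$ into $\ell_1$. Since $\ell_1$ has both the Schur property and the Radon-Nikod\'ym property, and these pass to subspaces, $\Free(M)$ inherits them. Conversely, $L_1$ has neither of these properties, so any space containing an isomorphic copy of $L_1$ fails both (ii) and (iii); and by Godard's result recalled in the introduction, $\lambda(M)>0$ forces $L_1$ to embed isometrically into $\Free(M)$, closing the cycle via (iv) $\Rightarrow$ (i).

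Concretely, (ii) $\Rightarrow$ (iv) and (iii) $\Rightarrow$ (iv) are immediate because Schur and RNP are hereditary to subspaces and $L_1$ has neither. The implication (iv) $\Rightarrow$ (i) is the contrapositive of Godard's embedding. So the substantive content is in (i) $\Rightarrow$ (ii) and (i) $\Rightarrow$ (iii); for separable $M$ this is immediate from Theorem~\ref{th:lipfree_tree_embedding} as described above.

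The main subtlety is that Theorem~\ref{th:lipfree_tree_embedding} assumes separability, whereas the corollary concerns arbitrary closed subsets of an $\RR$-tree. To bridge this gap I would invoke that all three properties at stake are \emph{separably determined}: a Banach space $X$ has Schur (resp.\ RNP, resp.\ contains no isomorphic copy of $L_1$) if and only if every separable subspace of $X$ does. The RNP case is classical, the Schur case is immediate (any witness of failure lies in the closed linear span of a countable sequence), and the $L_1$ case is trivial because $L_1$ is itself separable. It then suffices to check that every separable subspace $Y\subseteq\Free(M)$ lies inside $\Free(N)$ for some separable closed $N\subseteq M$: pick a countable dense sequence in $Y$, approximate each of its terms by finite real combinations of evaluation functionals $\delta(x)$, and let $N$ be the closure in $M$ of the countable set of $x$'s so produced. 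Then $\Free(N)$ embeds isometrically into $\Free(M)$ (via the McShane extension of Lipschitz functions on $N$ to $M$), whence $Y\subseteq\Free(N)$, and $\lambda(N)\leq\lambda(M)=0$ by monotonicity of length measure, reducing us to the separable case already handled.

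The main obstacle is really this separable-reduction bookkeeping; once it is in place, together with the standard facts that $\ell_1$ has Schur and RNP while $L_1$ has neither, the corollary follows at once from Theorem~\ref{th:lipfree_tree_embedding} and Godard's theorem.
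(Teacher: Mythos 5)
Your proposal is correct and follows essentially the same route as the paper: (iv)$\Rightarrow$(i) via Godard's isometric copy of $L_1$ when $\lambda(M)>0$, (ii)/(iii)$\Rightarrow$(iv) by heredity, and (i)$\Rightarrow$(ii),(iii) by reducing to separable subspaces $X\subset\lipfree{K}$ for a closed separable $K\subset M$ and applying Theorem~\ref{th:lipfree_tree_embedding}. The separable-reduction "bookkeeping" you spell out is exactly the step the paper dismisses as "easy to see", so there is nothing further to add.
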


\begin{proof}
If $\lambda(M)>0$ then $\lipfree{M}$ contains $L_1$ isometrically by \cite[Corollary 3.4]{Godard_2010}, so (iv) implies (i). Clearly, (ii) or (iii) imply (iv). Now assume (i). To prove (ii) and (iii), it suffices to show that every closed separable subspace $X$ of $\lipfree{M}$ has the Schur and Radon-Nikod\'ym properties. But it is easy to see that for any such $X$ there is a closed separable set $K\subset M$ such that $X\subset\lipfree{K}$, and clearly $\lambda(K)=0$ so $X$ is isomorphic to a subspace of $\ell_1$ by Theorem \ref{th:lipfree_tree_embedding}.
\end{proof}

It is currently unknown whether any of the equivalences (ii)$\Leftrightarrow$(iii), (ii)$\Leftrightarrow$(iv), (iii)$\Leftrightarrow$(iv) hold in general for Lipschitz-free spaces.

\begin{remark}
Another equivalent condition in Corollary \ref{c:RNPSchurL1} can be stated in terms of the set $\mathrm{SNA}(M) \subset \Lip_0(M)$ of \emph{strongly norm attaining} Lipschitz functions on $M$, i.e. those that attain their Lipschitz constant between two points of $M$. By \cite[Theorem 2.3 and Theorem 3.1]{CCGMR}, we get that the conditions (i)-(iv) are also equivalent to $\cl{\mathrm{SNA}(M)}=\Lip_0(M)$.
\end{remark}

\section{Rearrangements of subsets of \texorpdfstring{$\RR$}{R}-trees}

This section is devoted to proving Theorem \ref{tm:separable_tree_embedding}. The proof will be constructive, repeatedly applying a certain procedure on the tree that ``clears'' the branching points contained in a given segment of the tree so that their measure becomes 0, while keeping the other components of the tree unmodified. For the sake of economy of language, we shall give a name to this transformation:

\begin{definition}
Let $M$ be a subset of an $\RR$-tree $T$. A \emph{rearrangement} of $(M,T)$ is a pair $(\psi,T')$ where $T'$ is an $\RR$-tree that contains $T$ and $\psi\colon T\rightarrow T'$ is a root-preserving mapping that satisfies the following:
\begin{enumerate}[label={\upshape{(\Roman*)}}]
\item\label{cond:ra:order_p} $\psi$ preserves the order on $T$,
\item\label{cond:ra:order_i} $\psi|_M$ is an order isomorphism, and
\item\label{cond:ra:ineq} there is a constant $C>0$ such that
$$
d(p,q)\leq d(\psi(p),\psi(q))\leq C\cdot d(p,q)
$$
for all $p,q\in M$.
\end{enumerate}
We will say that the rearrangement has constant $C$.
\end{definition}

It is clear that the composition of rearrangements is again a rearrangement, the constant of the result being bounded by the product of the respective constants. Condition \ref{cond:ra:ineq} shows that $\psi|_M$ is a $C$-Lipschitz homeomorphism between $M$ and $\psi(M)$, and so $\psi(M)$ is complete if $M$ is. In particular, $\psi|_M$ is continuous and injective. However, $\psi$ need not (and will not) be either, and may e.g. map different branching points into the same one, or intervals of $T$ into disconnected sets.

One consequence of the definition is that $\psi$ does not decrease the length measure of $M$:

\begin{lemma}
\label{lm:rearr_measure_inequality}
Let $M$ be a closed subset of an $\RR$-tree $T$ and let $(\psi,T')$ be a rearrangement of $(M,T)$. Then $\lambda(\psi(M))\geq \lambda(M)$.
\end{lemma}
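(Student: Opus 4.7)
The plan is to unfold the definition $\lambda(M)=\sup\sum_k\lambda(M\cap I_k)$ over finite families of disjoint segments $I_k$, reduce to the case where each $I_k$ is $\preccurlyeq$-ascending with endpoints in $M$, prove the corresponding inequality $\lambda(M\cap[p,q])\leq\lambda(\psi(M)\cap[\psi(p),\psi(q)])$ one segment at a time, and verify that the image segments $J_k=[\psi(p_k),\psi(q_k)]$ are pairwise disjoint in $T'$, so that this new family witnesses the same lower bound for $\lambda(\psi(M))$.

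\emph{Step 1 (reduction).} Fix $\varepsilon>0$ and pick disjoint segments $I_k$ in $T$ with $\sum_k\lambda(M\cap I_k)>\lambda(M)-\varepsilon$ (if $\lambda(M)=\infty$, replace $\lambda(M)-\varepsilon$ by an arbitrary $K$). Any segment $[a,b]$ decomposes as $[a,a\wedge b]\cup[a\wedge b,b]$, a union of two $\preccurlyeq$-ascending pieces, so after refining we may assume each $I_k$ is ascending. Because $M$ is closed and $I_k$ is compact, $M\cap I_k$ is compact in $I_k$; taking $p_k=\inf(M\cap I_k)$ and $q_k=\sup(M\cap I_k)$ in the linear order of the ascending $I_k$ (which matches $\preccurlyeq$) yields $p_k,q_k\in M$ with $[p_k,q_k]\subseteq I_k$ and $M\cap I_k=M\cap[p_k,q_k]$. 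A standard shrinking makes the refined family $\{[p_k,q_k]\}$ pairwise strictly disjoint at the cost of absorbing isolated shared points into $\varepsilon$.

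\emph{Step 2 (per-segment estimate).} Fix $p\preccurlyeq q$ in $M$, set $A=M\cap[p,q]$, and consider $g=\phi_{\psi(p)\psi(q)}\circ\psi\circ\phi_{pq}^{-1}$ on $\phi_{pq}(A)\subseteq[0,d(p,q)]$. For $s\leq t$ in this set, the points $x=\phi_{pq}^{-1}(s)$ and $y=\phi_{pq}^{-1}(t)$ satisfy $p\preccurlyeq x\preccurlyeq y\preccurlyeq q$, so condition (I) gives $\psi(p)\preccurlyeq\psi(x)\preccurlyeq\psi(y)\preccurlyeq\psi(q)$ and condition (III) yields $g(t)-g(s)=d(\psi(x),\psi(y))\geq d(x,y)=t-s$. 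Thus $g$ is strictly increasing and expanding, so $g^{-1}\colon\phi_{\psi(p)\psi(q)}(\psi(A))\to\phi_{pq}(A)$ is well defined and $1$-Lipschitz; both domain and codomain are compact, hence measurable, and since $1$-Lipschitz maps do not increase Lebesgue measure, transporting back through the two isometries gives $\lambda(M\cap[p,q])\leq\lambda(\psi(A))\leq\lambda(\psi(M)\cap[\psi(p),\psi(q)])$ by monotonicity of the length measure.

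\emph{Step 3 (disjointness of image segments and assembly).} Suppose two of the segments $J_k=[\psi(p_k),\psi(q_k)]$ and $J_l=[\psi(p_l),\psi(q_l)]$ intersect in $T'$. The intersection of two ascending segments $[u,u']$ and $[v,v']$ in an $\RR$-tree is nonempty only if the join $u\vee v$ exists, which in an $\RR$-tree forces $u,v$ to be $\preccurlyeq$-comparable, and then the join must lie below $u'\wedge v'$; applied here, WLOG $\psi(p_k)\preccurlyeq\psi(p_l)\preccurlyeq\psi(q_k)$. Condition (II) then transfers this to $p_k\preccurlyeq p_l\preccurlyeq q_k$, i.e., $p_l\in[p_k,q_k]\subseteq I_k$, contradicting $p_l\in I_l$ and $I_k\cap I_l=\varnothing$. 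So the $J_k$ are pairwise disjoint, and
$$
\lambda(M)-\varepsilon\leq\sum_k\lambda(M\cap[p_k,q_k])\leq\sum_k\lambda(\psi(M)\cap J_k)\leq\lambda(\psi(M)),
$$
so letting $\varepsilon\to 0$ concludes. I expect the main obstacle to be Step 3: disjointness of image segments is not at all automatic for a merely order-preserving map, and its proof uses condition (II) in an essential way and not only (I); without (II), $\psi$ could fold different branches of $T$ onto a common ray of $T'$, producing overlapping image segments and invalidating the final chain of inequalities.
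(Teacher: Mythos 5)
Your proof is correct and follows essentially the same route as the paper: reduce to ascending segments with endpoints in $M$, prove the per-segment inequality via the expanding map induced by conditions \ref{cond:ra:order_p} and \ref{cond:ra:ineq}, and use the order isomorphism \ref{cond:ra:order_i} to get disjointness of the images (the paper only needs disjointness of the sets $\psi(M)\cap J_k$, which follows from injectivity of $\psi|_M$, but your stronger claim about the segments themselves is also valid and proved the same way).
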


\begin{proof}
We first claim that
\begin{equation}
\label{eq:rearr_measure_ineq}
\lambda(M\cap [x,y])\leq\lambda(\psi(M)\cap [\psi(x),\psi(y)])
\end{equation}
for any $x,y\in M$ such that $x\preccurlyeq y$. 
Indeed, property \ref{cond:ra:order_i} implies that $\psi(M \cap [x,y])=\psi(M) \cap [\psi(x),\psi(y)]$.
The desired inequality \eqref{eq:rearr_measure_ineq} thus follows from a corresponding inequality for the Lebesgue measure on~$\RR$.

Now fix $\varepsilon>0$ and let $I_k$ for $k=1,\ldots,n$ be disjoint segments in $T$ such that
$$
\lambda(M) < \sum_{k=1}^n \lambda(M\cap I_k) + \varepsilon .
$$
We may assume that the segments $I_k$ are of the form $[x,y]$ for $x\prec y$, as we may replace $[x,y]$ by $[x',x]\cup [y',y]$ where $x' \in [x\wedge y,x]$ and $y' \in [x \wedge y, y]$ are suitably chosen.
Since $M$ is closed, for every $k=1,\ldots,n$ we have $M\cap I_k=M\cap [p_k,q_k]$ for some $p_k,q_k\in M$ such that $p_k\preccurlyeq q_k$. Hence \eqref{eq:rearr_measure_ineq} implies
$$
\sum_{k=1}^n \lambda(M\cap I_k) = \sum_{k=1}^n \lambda(M\cap [p_k,q_k]) \leq \sum_{k=1}^n \lambda(\psi(M)\cap [\psi(p_k),\psi(q_k)])
$$
Condition \ref{cond:ra:order_i} implies that the sets $\psi(M)\cap [\psi(p_k),\psi(q_k)]$ are pairwise disjoint, hence we get $\lambda(M)<\lambda(\psi(M))+\varepsilon$. This completes the proof.
\end{proof}

Under the same assumptions, we actually also have that $\lambda(\psi(M))\leq C\cdot\lambda(M)$. Since we do not need this last fact in what follows, we omit its proof.

Let us now fix a separable $\RR$-tree $T$ and a closed subset $M\subset T$. For points $x\neq y\in T$, we will denote by $T_{xy}$ the union of the connected components of $T\setminus\set{x}$ that do not contain $y$. Note that $\cl{T_{xy}}=T_{xy}\cup\set{x}$, and that $T_{xy}$ is connected if and only if $x\notin\Br(T)$. We will consider a specific type of rearrangement that modifies the topology of the branching points contained within a given segment.

\begin{definition}
Let $(\psi,T')$ be a rearrangement of $(M,T)$. We will say that the rearrangement is \emph{subordinated} to an interval $[x,y]\subset T$ if it satisfies the following:
\begin{enumerate}[label={\upshape{(\Roman*)}}]
\setcounter{enumi}{3}
\item\label{cond:ra:disj} the images under $\psi$ of disjoint connected subsets of $T\setminus [x,y]$ are disjoint,
\item\label{cond:ra:isom} the restriction of $\psi$ to each connected subset of $T\setminus [x,y]$ is an isometry, and
\item\label{cond:ra:ident} the restriction of $\psi$ to $T_{xy} \cup [x,y] \cup T_{yx}$ is the identity.
\end{enumerate}
\end{definition}

The idea here is to change the positions within $[x,y]$ where the connected subtrees are attached, without modifying the subtrees themselves. In particular, the length measure of $M$ is preserved by such a rearrangement:

\begin{lemma}
\label{lm:subrearr_preserves_measure}
Let $M$ be a subset of a separable $\RR$-tree $T$ and $(\psi,T')$ be a rearrangement of $(M,T)$ subordinated to an interval. Then $\lambda(\psi(M))=\lambda(M)$.
\end{lemma}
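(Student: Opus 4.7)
The plan is to combine the previous lemma, which already yields $\lambda(\psi(M))\geq\lambda(M)$, with a matching upper bound $\lambda(\psi(M))\leq\lambda(M)$ that exploits the finer structural information given by the subordination conditions \ref{cond:ra:disj}--\ref{cond:ra:ident}. The key observation is that a subordinated rearrangement only reshuffles the side-branches of $[x,y]$ without deforming them.

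To make this precise, I would first decompose $T$ as the disjoint union
$$
T \;=\; [x,y]\;\sqcup\; T_{xy}\;\sqcup\; T_{yx}\;\sqcup\;\bigsqcup_{\alpha} C_\alpha,
$$
where $(C_\alpha)$ enumerates the connected components of $T\setminus [x,y]$ whose attachment point lies in $(x,y)$. Each such attachment point is automatically in $\Br(T)$, so by separability of $T$ the family $(C_\alpha)$ is at most countable. From \ref{cond:ra:ident} the map $\psi$ is the identity on the first three pieces, from \ref{cond:ra:isom} it restricts to an isometry on each $C_\alpha$, and from \ref{cond:ra:disj} the images $\psi(C_\alpha)$ are pairwise disjoint. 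Since length measure is an intrinsic metric invariant and isometries between subsets of $\RR$-trees preserve it, we obtain
$$
\lambda\bigl(\psi(M\cap C_\alpha)\bigr)=\lambda(M\cap C_\alpha) \quad\text{and}\quad \lambda\bigl(\psi(M\cap Z)\bigr)=\lambda(M\cap Z)
$$
for every $\alpha$ and for $Z\in\{[x,y],T_{xy},T_{yx}\}$.

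Finally I would sum up. By countable subadditivity of $\lambda$ applied to $\psi(M)$ viewed as a subset of $T'$, and by countable additivity applied to the disjoint decomposition of $M$ above,
$$
\lambda(\psi(M)) \;\leq\; \lambda(M\cap [x,y])+\lambda(M\cap T_{xy})+\lambda(M\cap T_{yx})+\sum_{\alpha}\lambda(M\cap C_\alpha) \;=\; \lambda(M),
$$
which together with Lemma \ref{lm:rearr_measure_inequality} gives the equality.

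The one point that needs justification is the (sub)additivity of $\lambda$ over the countable decomposition, since in this paper $\lambda$ is introduced via a supremum over \emph{finite} collections of disjoint segments. The cleanest way around this is to argue directly at the level of segments: approximate the supremum defining $\lambda(\psi(M))$ by a finite disjoint family $J_1,\dots,J_n\subset T'$, split each $J_k$ (at the finitely many points where it leaves one piece of the decomposition of $T'$ and enters another) into finitely many subsegments each contained in $\psi([x,y]\cup T_{xy}\cup T_{yx})$ or in a single $\psi(C_\alpha)$, pull these subsegments back to $T$ through the identity or the isometry $\psi|_{C_\alpha}$, and observe that they form a disjoint family of segments in $T$ whose total $\lambda$-content is bounded by $\lambda(M)$. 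This is the routine bookkeeping step and, once it is carried out, no further difficulty arises.
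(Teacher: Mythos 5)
Your proof is correct and follows essentially the same route as the paper: the paper uses exactly this decomposition of $T$ into $T_{xy}\cup[x,y]\cup T_{yx}$ together with the countably many connected components of its complement, invokes \ref{cond:ra:disj}--\ref{cond:ra:ident} to see that $\psi$ is an isometry on each piece with pairwise disjoint images, and concludes $\lambda(\psi(M))=\lambda(M)$ directly by additivity over this countable partition (rather than proving only the inequality $\lambda(\psi(M))\leq\lambda(M)$ and importing the reverse one from Lemma~\ref{lm:rearr_measure_inequality}, as you do). One small correction to your final bookkeeping sketch: a segment $J_k$ in $T'$ need not split into \emph{finitely} many subsegments, one per piece of the decomposition (the attachment points may accumulate), but it does split into countably many, since each $J_k\cap\psi(C_\alpha)$ is convex and hence a subsegment, and because within the single segment $J_k$ the length measure is just Lebesgue measure, countable additivity there closes the argument with no further difficulty.
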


\begin{proof}
Since $T$ is separable, $\Br(T)$ is countable and $T\setminus\set{p}$ has countably many connected components for any $p\in T$. In particular, if $(\psi,T')$ is subordinated to $[x,y]$ then $T\setminus(T_{xy}\cup [x,y]\cup T_{yx})$ has countably many connected components. Enumerate them as $A_n$, $n\in I$, where $I$ is countable. Then
$$
\lambda(M)=\lambda(M\cap(T_{xy}\cup [x,y]\cup T_{yx}))+\sum_{n\in I}\lambda(M\cap A_n) .
$$
By conditions \ref{cond:ra:disj}-\ref{cond:ra:ident}, $\psi$ is an isometry when restricted to any of these sets and their images are disjoint, hence
$$
\lambda(M)=\lambda(\psi(M\cap(T_{xy}\cup [x,y]\cup T_{yx})))+\sum_{n\in I}\lambda(\psi(M\cap A_n))
$$
which is clearly equal to $\lambda(\psi(M))$.
\end{proof}

\subsection{Construction of rearrangements}

We will now proceed to prove a series of lemmas where rearrangements are constructed so that they are subordinated to intervals of $T$ of increasing coverage.

\begin{lemma}
\label{lm:separated_interval}
Let $x\prec y \in T\setminus\Br(M)$ be such that $[x,y]\cap M=\varnothing$, and $\varepsilon>0$. Then there is a rearrangement $(\psi,T')$ of $(M,T)$ with constant $1+\varepsilon$, subordinated to $[x,y]$, such that the set $\Br(\psi(M))\cap [x,y]$ is finite.
\end{lemma}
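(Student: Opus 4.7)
The plan exploits the compactness of $[x,y]$ and the closedness of $M$ together with $M\cap[x,y]=\varnothing$ to extract a positive clearance $\rho:=d(M,[x,y])>0$; this separation is the key device for turning an additive error into a multiplicative one.

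The construction proceeds as follows. Fix $N\in\NN$ with $\delta:=d(x,y)/N\leq\varepsilon\rho$, and partition $[x,y]$ into equal subintervals $J_k=[x_{k-1},x_k]$ with $x_0=x$ and $x_N=y$. For each connected component $D$ of $T\setminus[x,y]$ with attachment point $a_D\in(x,y)$, pick the unique $k$ with $a_D\in J_k$, detach $D$, and attach an isometric copy $\psi(D)$ in $T'$ via a fresh pedicle segment of length $\ell_D:=d(a_D,x_k)$ emanating from the \emph{right} endpoint $x_k$ of $J_k$. The tree $T'$ is obtained by gluing these pedicles and isometric copies to $T$ along $x_k$ (a construction justified by Fact~\ref{fact:union}). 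Define $\psi$ to be the identity on $T_{xy}\cup[x,y]\cup T_{yx}$ and the canonical isometry $D\to\psi(D)$ on each such $D$. Conditions \ref{cond:ra:disj}, \ref{cond:ra:isom}, \ref{cond:ra:ident} hold by construction; any branching point of $\psi(M)$ inside $[x,y]$ can only occur where a new pedicle meets $[x,y]$, i.e.\ among the finitely many hubs $x_0,\ldots,x_N$, giving the finiteness conclusion.

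The heart of the argument is the bi-Lipschitz estimate \ref{cond:ra:ineq}. Non-contraction is immediate from the triangle inequality, since every rerouted geodesic is forced to traverse the pedicles. A direct computation of the overshoot $\eta:=d(\psi(p),\psi(q))-d(p,q)$ in each configuration (same component, different side-components in the same $J_k$, different side-components in distinct $J_k$ and $J_l$, and the mixed $T_{xy}$/side-component case) shows $\eta\leq 2\delta$ uniformly: the rerouted geodesic must overshoot to the hub $x_k$ (adding $2(t_k-\alpha_{a_D})\leq 2\delta$) before descending a pedicle. Crucially, whenever $\eta>0$ the points $p,q\in M$ lie in distinct components of $T\setminus[x,y]$, so their geodesic crosses $[x,y]$ and collects a clearance of at least $\rho$ on each side, yielding $d(p,q)\geq 2\rho$. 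Hence $d(\psi(p),\psi(q))/d(p,q)\leq 1+2\delta/(2\rho)\leq 1+\varepsilon$. For the order conditions \ref{cond:ra:order_p} and \ref{cond:ra:order_i}, the choice of the right endpoint $x_k$ as hub is essential: any $q\in T$ with $q\preccurlyeq a_D$ satisfies $q\preccurlyeq x_k$, so $q\in[0,\psi(p)]$ in $T'$ for every $p\in D$; the remaining cases are routine, and the reverse implication needed for the order isomorphism on $M$ follows from injectivity of $\psi|_M$ together with the fact that distinct side-components map into disjoint branches of $T'$.

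The main obstacle is precisely producing a \emph{multiplicative} rather than merely additive bi-Lipschitz constant. Were $M$ allowed to accumulate on $[x,y]$, pairs $p,q\in M$ could be arbitrarily close while $\eta$ would remain bounded below by the partition scale, destroying any uniform distortion bound; the positive clearance $\rho$, guaranteed by closedness of $M$ and compactness of $[x,y]$, is exactly what converts the additive bound $\eta\leq 2\delta$ into the desired multiplicative one.
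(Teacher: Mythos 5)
Your construction is correct and follows essentially the same route as the paper's proof: partition $[x,y]$ into subintervals of length at most $\varepsilon\cdot d(M,[x,y])$, reattach each hanging component to the right endpoint of its subinterval via a new connecting segment so that branching inside $[x,y]$ is confined to the finitely many partition points, and use the clearance $d(M,[x,y])>0$ to convert the additive overshoot into a multiplicative bound. The only (harmless) difference is that you take each connector to have length $d(a_D,x_k)$ rather than a fixed length $L$, which slightly simplifies the case analysis and improves the constant.
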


\begin{proof}
Let $L=d(M,[x,y])\cdot\varepsilon/3$ and note that $L>0$ (since $[x,y]\cap M=\varnothing$ and $M$ is closed).
Find a finite sequence of points
$$
x=z_0\prec z_1\prec\ldots\prec z_m=y
$$
in $[x,y]$ such that $d(z_{k-1},z_k)\leq L$ and $z_k\notin \Br(T)$ for $k=1,\ldots,m$; this is possible because $\Br(T)$ is countable.

For any $b\in\Br(T)\cap (x,y)$ consider the connected components of $T\setminus\set{b}$ that contain neither $x$ nor $y$. There are at most countably many such components; enumerate them as $(A_n)_{n}$ and let $(b_n)_{n}$ be the corresponding branching points in $[x,y]$,
which we now consider as their respective roots.
Note that we possibly have $b_m = b_{n}$ if $A_m$ and $A_{n}$ share the same root.
We construct a new $\RR$-tree $T'$ as follows: for each $n$, take an isometric copy $A'_n \cup \set{b'_n}$ of $A_n \cup \set{b_n}$, add a segment $B'_n$ of length $L$ at $b'_n$, and attach the end of this segment to $z_{k_n}$, where $k_n\in\NN$ is chosen so that $z_{k_n-1}\prec b_n\prec z_{k_n}$. Now define the mapping $\psi$ as follows: for
$$
p\in T_{xy} \cup [x,y] \cup T_{yx} = T\setminus\bigcup_{n=1}^\infty A_n
$$
let $\psi(p)=p$, and for $p\in A_n$ for some $n$, let $\psi(p)=\psi_n(p)$ where $\psi_n\colon A_n\cup\set{b_n}\rightarrow A'_n\cup\set{b'_n}$ is the corresponding isometry. The effect of $\psi$ may also be described as follows: for each $p\in M$ in a component $A_n$, its distance to $[x,y]$ is increased by $L$ and its meet with $y$ is moved from $b_n$ to $z_{k_n}$. See Figure \ref{fig:rearrangement_1} for reference.

\begin{figure*}[t]
  \centering
  \includegraphics[width=0.9\textwidth]{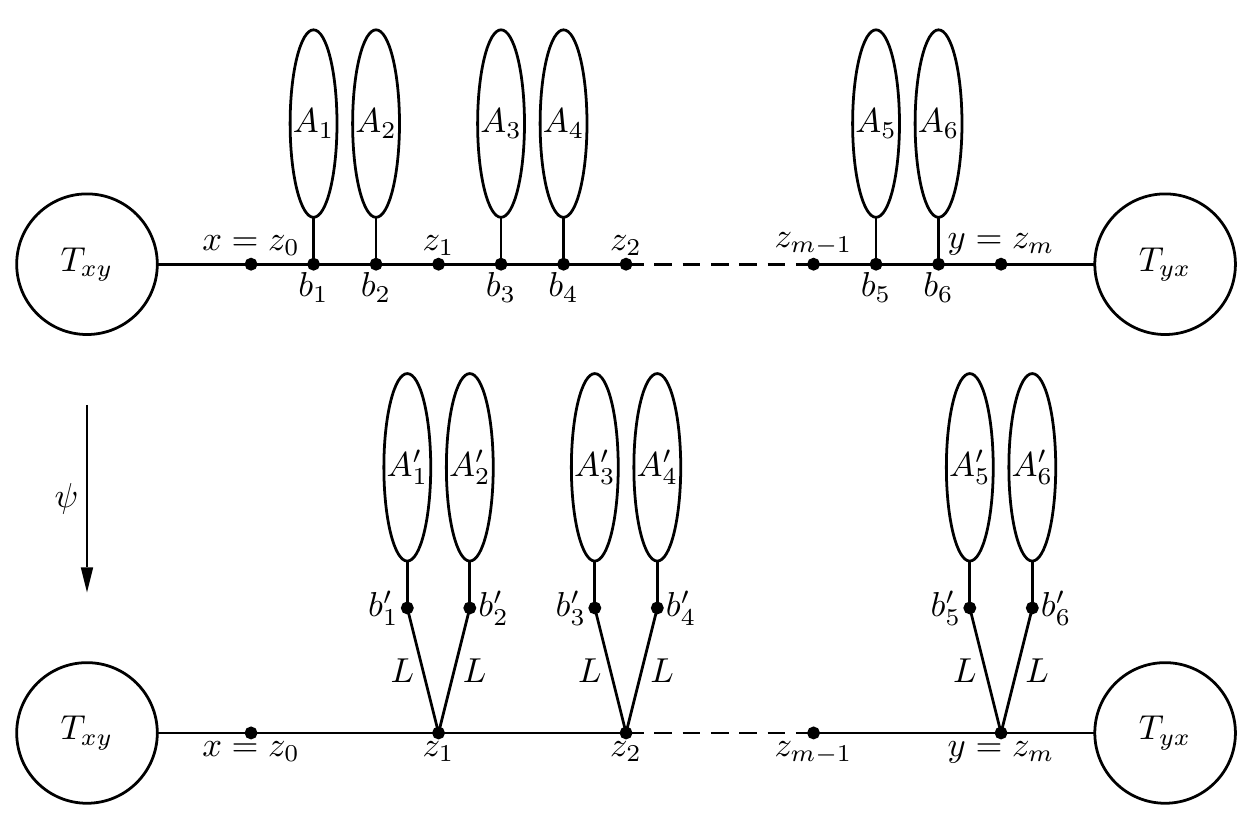}
	\caption{Representation of the construction in Lemma \ref{lm:separated_interval}.}
	\label{fig:rearrangement_1}
\end{figure*}

Let us show that $(\psi,T')$ is the desired rearrangement of $(M,T)$. Indeed, conditions \ref{cond:ra:disj}-\ref{cond:ra:ident} hold trivially. For condition \ref{cond:ra:order_p}, notice that $p\preccurlyeq q$ implies that either
\begin{itemize}
\item $p,q\in A_n$ for some $n$, in which case $\psi|_{A_n\cup\set{b_n}}=\psi_n$ is an isometry and therefore an order isomorphism,
\item $p,q\in T_{xy}\cup [x,y]\cup T_{yx}$, with a similar conclusion, or
\item $p\in T_{xy}\cup [x,y]$ and $q\in A_n$, in which case $p\preccurlyeq b_n\prec q$ and $\psi(p)=p\preccurlyeq b_n\preccurlyeq z_{k_n}\prec\psi(q)$.
\end{itemize}
Similar reasoning shows that condition \ref{cond:ra:order_i} holds, taking into account that neither $M$ nor $\psi(M)$ intersect the segment $[x,y]$. It is also clear by construction that $\psi(M)$ intersects no $A_n$ and so
$$
\Br(\psi(M))\cap [x,y]\subset\set{z_0,z_1,\ldots,z_m} .
$$

Finally, we will prove that condition \ref{cond:ra:ineq} holds with $C=1+\varepsilon$ by considering all possible pairs of points in $M$. We have already seen that $\psi|_M$ is an isometry when restricted to $T_{xy}\cup T_{yx}$ or to any $A_n$. For the remaining cases:
\begin{itemize}

\item If $a\in A_n$ and $c\in T_{xy}\cup T_{yx}$ then $d(a,c) = d(a,b_n) + d(b_n,c)$ and
\begin{align*}
d(\psi(a),\psi(c)) &= d(\psi(a),b'_n) + d(b'_n,z_{k_n}) + d(z_{k_n},c) \\
&= d(a,b_n) + L + d(z_{k_n},c) \\
&= d(a,c) + L + d(z_{k_n},c) - d(b_n,c) \\
&= d(a,c) + L \pm d(b_n,z_{k_n}) 
\end{align*}
where the sign depends on whether $c\in T_{xy}$ or $c\in T_{yx}$. Since $d(b_n,z_{k_n})<L$, we get in any case
$$
1\leq\frac{d(\psi(a),\psi(c))}{d(a,c)}\leq 1+\frac{2L}{d(a,c)}< 1+\varepsilon .
$$

\item If $a\in A_n$ and $\hat{a}\in A_{\hat{n}}$ with $k_n=k_{\hat{n}}=k$, then we have
$$
d(a,\hat{a})=d(a,b_n)+d(b_n,b_{\hat{n}})+d(b_{\hat{n}},\hat{a})
$$
and
\begin{align*}
d(\psi(a),\psi(\hat{a})) &= d(\psi(a),b'_n) + d(b'_n,z_k) + d(z_k,b'_{\hat{n}}) + d(b'_{\hat{n}},\psi(\hat{a})) \\
&= d(a,b_n) + 2L + d(b_{\hat{n}},\hat{a}) \\
&= d(a,\hat{a}) + 2L - d(b_n,b_{\hat{n}}) .
\end{align*}
Since $d(b_n,b_{\hat{n}})<L$, we obtain
$$
1\leq\frac{d(\psi(a),\psi(\hat{a}))}{d(a,\hat{a})}\leq 1+\frac{2L}{d(a,\hat{a})}< 1+\varepsilon .
$$

\item If $a\in A_n$ and $\hat{a}\in A_{\hat{n}}$ where $k_n\neq k_{\hat{n}}$, then again
$$
d(a,\hat{a})=d(a,b_n)+d(b_n,b_{\hat{n}})+d(b_{\hat{n}},\hat{a})
$$
and
\begin{align*}
d(\psi(a),\psi(\hat{a})) &= d(\psi(a),b'_n) + d(b'_n,z_{k_n}) + d(z_{k_n},z_{k_{\hat{n}}}) \\
&\qquad\qquad + d(z_{k_{\hat{n}}},b'_{\hat{n}}) + d(b'_{\hat{n}},\psi(\hat{a})) \\
&= d(a,b_n) + L + d(z_{k_n},z_{k_{\hat{n}}}) + L + d(b_{\hat{n}},\hat{a}) \\
&= d(a,\hat{a}) + 2L + d(z_{k_n},z_{k_{\hat{n}}}) - d(b_n,b_{\hat{n}}) \\
&= d(a,\hat{a}) + 2L \pm \pare{d(b_n,z_{k_n}) - d(b_{\hat{n}},z_{k_{\hat{n}}})}
\end{align*}
where the sign depends on which of $k_n$, $k_{\hat{n}}$ is greater. In any case
$$
1\leq\frac{d(\psi(a),\psi(\hat{a}))}{d(a,\hat{a})}\leq 1+\frac{3L}{d(a,\hat{a})}\leq 1+\varepsilon .
$$

\end{itemize}
This covers all cases and ends the proof.
\end{proof}

In the following lemma we will deal with the situation when $x$ and $y$ are allowed to be elements of $M$.

\begin{lemma}
\label{lm:open_interval}
Let $x\prec y\in M$ be such that $(x,y)\cap M=\varnothing$, and $\varepsilon>0$. Then there is a rearrangement $(\psi,T')$ of $(M,T)$ with constant $1+\varepsilon$, subordinated to $[x,y]$, such that the set $\Br(\psi(M))\cap (x,y)$ is countable and its accumulation points are contained in $\set{x,y}$.
\end{lemma}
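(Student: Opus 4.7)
The plan is to partition $(x,y)$ into a locally finite family of closed subintervals whose endpoints avoid both $M$ and $\Br(M)$, apply Lemma \ref{lm:separated_interval} to each piece, and glue the resulting rearrangements into a single global one.

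Since $M$ is separable, so is $\conv(M)$, hence $\Br(M) = \Br(\conv(M))$ is countable; combined with the hypothesis $(x,y) \cap M = \varnothing$, this lets me choose a bilateral strictly increasing sequence $(u_n)_{n \in \ZZ}$ in $(x,y) \setminus \Br(M)$ with $u_n \to x$ as $n \to -\infty$ and $u_n \to y$ as $n \to +\infty$. For each $n$ the pair $u_n \prec u_{n+1}$ satisfies the hypotheses of Lemma \ref{lm:separated_interval}, so applying that lemma with parameter $\varepsilon/2$ yields a rearrangement $(\psi_n, T'_n)$ of $(M,T)$ subordinated to $[u_n, u_{n+1}]$ with constant $1+\varepsilon/2$, such that $\Br(\psi_n(M)) \cap [u_n, u_{n+1}]$ is finite. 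By construction $\psi_n$ is the identity outside the branching components of $T$ attached inside the open interval $(u_n, u_{n+1})$, and the regions affected by different $n$'s are pairwise disjoint.

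I would then combine the $\psi_n$'s into a single $\psi \colon T \to T'$: let $T'$ be the $\RR$-tree obtained by simultaneously gluing all the $T'_n$ along their common unmodified part via Fact \ref{fact:union} (passing to completions through Fact \ref{fact:union_completion} if needed), and define $\psi$ to act as $\psi_n$ on the components attached inside $(u_n, u_{n+1})$ and as the identity everywhere else. Conditions \ref{cond:ra:order_p}, \ref{cond:ra:order_i}, \ref{cond:ra:disj}, \ref{cond:ra:isom}, and \ref{cond:ra:ident} all transfer locally from the $\psi_n$'s because the modifications occupy pairwise disjoint regions of $T \setminus [x,y]$ and the backbone $T_{xy} \cup [x,y] \cup T_{yx}$ is preserved. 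To verify \ref{cond:ra:ineq} I would case-split on which affected region contains $p$ and $q$: when both belong to the region of a single $\psi_n$, Lemma \ref{lm:separated_interval} gives the bound directly; when $p$ lies in the region of $\psi_n$ and $q$ in that of $\psi_m$ with $n \neq m$, the same three-term computation as in the third bullet of the proof of Lemma \ref{lm:separated_interval} yields
\[
0 \leq d(\psi(p), \psi(q)) - d(p,q) \leq 2(L_n + L_m),
\]
where $L_k = d(M, [u_k, u_{k+1}]) \cdot \varepsilon/6$ is the length added by $\psi_k$, and the lower bound $d(p,q) \geq d(M, [u_n, u_{n+1}]) + d(M, [u_m, u_{m+1}])$ forces distortion at most $1 + 2\varepsilon/3 \leq 1 + \varepsilon$. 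The mixed case with only one of $p, q$ in a modified region is already covered by the estimates within the proof of Lemma \ref{lm:separated_interval}.

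Finally, the branching structure is controlled because $\Br(\psi(M)) \cap (x,y) \subset \bigcup_n \Br(\psi_n(M)) \cap [u_n, u_{n+1}]$ is a countable union of finite sets; any accumulation point would have to escape every fixed $[u_n, u_{n+1}]$ and hence be a limit of $(u_n)$, i.e., belong to $\set{x,y}$. The main obstacle is the cross-interval distortion step: unlike in Lemma \ref{lm:separated_interval}, where a single uniform $L$ controls all perturbations, here $L_n$ necessarily varies with $n$, and the bound must remain uniform across all pairs $(n,m)$. Scaling $L_n$ proportionally to $d(M, [u_n, u_{n+1}])$ is exactly what allows each contribution $L_n / d(p,q)$ to be estimated independently of how close $u_n$ gets to the boundary $\set{x, y}$.
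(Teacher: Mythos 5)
Your proof is correct, and it diverges from the paper's argument precisely at the step you identify as the main obstacle. The paper also partitions $(x,y)$ by a bilateral sequence and applies Lemma~\ref{lm:separated_interval} to each piece, but it keeps that lemma as a black box: it assigns to the $k$-th piece the tailored constant $1+\varepsilon_k$ with $\varepsilon_k=\varepsilon\cdot\min\set{d_k/(d_k+d(z_{k-1},x)),\,d_k/(d_k+d(z_k,y))}$, and then controls the cross-interval distortion by writing $d(\psi(p),\psi(q))-d(p,q)=\bigl(d(\psi_m(p),y)-d(p,y)\bigr)+\bigl(d(\psi_n(q),x)-d(q,x)\bigr)$ and invoking condition \ref{cond:ra:ineq} of each $\psi_k$ only on pairs involving the endpoints $x,y\in M$. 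You instead use a uniform constant per piece but open up the construction of Lemma~\ref{lm:separated_interval}, bounding the perturbation by $2(L_n+L_m)$ with $L_k\propto d(M,I_k)$ and pairing this with $d(p,q)\geq d(M,I_n)+d(M,I_m)$; your observation that a black-box application with uniform constant would fail (the distortion would only be bounded by roughly $\tfrac{\varepsilon}{2}(d(p,q)+d(x,y))$) is exactly right and is why the paper resorts to the tailored $\varepsilon_k$. Your route buys a more transparent geometric estimate at the cost of modularity: as written, Lemma~\ref{lm:separated_interval}'s statement does not expose the length $L$ of the added segments nor the fact that attachment points move by at most $L$, so you would need to either strengthen its statement or inline that part of its proof. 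Two small points: with $L_k=d(M,I_k)\varepsilon/6$ the cross-interval ratio is $1+\varepsilon/3$, not $1+2\varepsilon/3$ (harmless either way); and since you choose the $u_n$ outside $\Br(M)$ rather than $\Br(T)$, you should note that any component of $T\setminus\set{u_n}$ missing $x$ and $y$ contains no point of $M$ (else $u_n\in\Br(\conv(M))$), so such components can be ignored.
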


\begin{proof}
Choose a doubly infinite sequence $(z_k)_{k\in\ZZ}$ of elements in $(x,y)$ such that $z_k\prec z_{k'}$ if $k<k'$, $\lim_{k\rightarrow-\infty} z_k=x$, and $\lim_{k\rightarrow\infty} z_k=y$. Since $\Br(T)$ is countable, we may choose them so that $\Br(T)$ does not intersect $Z=\set{z_k:k\in\ZZ}$. Then $(x,y)\setminus Z$ is the disjoint union of the open intervals $I_k^o=(z_{k-1},z_k)$ for $k\in\ZZ$. Express $T\setminus Z$ as a partition into connected components
$$
T\setminus Z = A_{-\infty} \cup \pare{\bigcup_{k=-\infty}^\infty A_k} \cup A_{\infty}
$$
where $I_k^o\subset A_k$ for $k\in\ZZ$ and we denote $A_{-\infty}=\cl{T_{xy}}$, $A_{\infty}=\cl{T_{yx}}$, $z_{-\infty}=x$ and $z_{\infty}=y$.

For each $k\in\ZZ$, let $I_k=[z_{k-1},z_k]$ and
$$
\varepsilon_k=\varepsilon\cdot\min\set{\frac{d_k}{d_k+d(z_{k-1},x)},\frac{d_k}{d_k+d(z_k,y)}}<\varepsilon
$$
where $d_k=d(M,I_k)>0$, and use Lemma \ref{lm:separated_interval} to obtain a rearrangement $(\psi_k,T_k)$ of $(M,T)$ subordinated to $I_k$ with constant $1+\varepsilon_k$, such that $\Br(\psi_k(M))\cap I_k$ is finite.
By Fact~\ref{fact:union_completion}, we may assume that each $T_k$ is complete.
Notice that, since $(\psi_k,T_k)$ is a rearrangement subordinated to $I_k$, point \ref{cond:ra:ident} implies $T_{xy} \cup [x,y] \cup T_{yz}$ is a subset of $T_k$ for every $k$. 
Thus, we may define an $\RR$-tree metric on  $T'=\bigcup_{k\in\ZZ}T_k$ as in Fact~\ref{fact:union}
and define $\psi\colon T\rightarrow T'$ by
$$
\psi(p)=\begin{cases}
\psi_k(p) & \text{if $p\in A_k$ for some $k\in\ZZ$} \\
p & \text{otherwise}
\end{cases} .
$$
We claim that $(\psi,T')$ is the desired rearrangement. Indeed, since every $\psi_k$ restricts to the identity outside of $A_k$, conditions \ref{cond:ra:disj}-\ref{cond:ra:ident} follow from the corresponding conditions for the $\psi_k$. Notice also that
$$
\Br(\psi(M))\cap (x,y)=\bigcup_{k=-\infty}^\infty\Br(\psi(M))\cap I_k=\bigcup_{k=-\infty}^\infty\Br(\psi_k(M))\cap I_k
$$
is a countable union of finite sets that has no accumulation points other than (possibly) $x$ and $y$.

To check condition \ref{cond:ra:ineq}, let $p\in M\cap A_m$ and $q\in M\cap A_n$ where $m,n\in\ZZ\cup\set{-\infty,\infty}$. If $m=n$ then the inequalities follow from $\varepsilon_n\leq\varepsilon$ and the properties of $\psi_n$ (or from isometry, if $n=\pm\infty$). Otherwise suppose $m<n$.
Then we have
\begin{align*}
d(p,q) &= d(p,z_m) + d(z_m,z_{n-1}) + d(z_{n-1},q) \\
&= d(p,y) - d(z_m,y) + d(z_m,z_{n-1}) + d(x,q) - d(x,z_{n-1})
\end{align*}
and similarly
\begin{multline*}
d(\psi(p),\psi(q)) = d(\psi(p),y) - d(z_m,y) \\ + d(z_m,z_{n-1}) + d(x,\psi(q)) - d(x,z_{n-1})
\end{multline*}
hence
$$
d(\psi(p),\psi(q)) - d(p,q) = d(\psi_m(p),y) - d(p,y) + d(\psi_n(q),x) - d(q,x) .
$$
Since $\psi_m$ and $\psi_n$ are rearrangements and $x,y\in M$, this quantity is between $0$ and $\varepsilon_m d(p,y) + \varepsilon_n d(q,x)$. Now notice that
$$
\varepsilon_m d(p,y)\leq\varepsilon\,\frac{d_m}{d_m+d(z_m,y)}(d(p,z_m)+d(z_m,y))\leq\varepsilon d(p,z_m)
$$
and similarly $\varepsilon_n d(q,x)\leq\varepsilon d(q,z_{n-1})$, so we get
$$
d(\psi(p),\psi(q))\leq d(p,q)+\varepsilon d(p,z_m)+\varepsilon d(z_{n-1},q)\leq (1+\varepsilon)d(p,q)
$$
as required.

For condition \ref{cond:ra:order_p}, suppose similarly that $p\in A_m$ and $q\in A_n$ with $m<n$ are such that $p\preccurlyeq q$. Then $p\preccurlyeq z_m\preccurlyeq z_{n-1}\preccurlyeq q$ and it follows
$$
\psi(p)\preccurlyeq\psi(z_m)=z_m\preccurlyeq z_{n-1}=\psi(z_{n-1})\preccurlyeq\psi(q)
$$
where the first and last inequalities are implied by the properties of $\psi_m$ and $\psi_n$. The cases where $p$ or $q$ are in $Z$ are handled similarly. Finally, if $p,q\in M$ and $\psi(p)\preccurlyeq\psi(q)$, then either both are in the same $A_n$, in which case $p\preccurlyeq q$ follows from order isomorphism of $\psi_n$, or $\psi(p)\in A_{-\infty}$ so that $p=\psi(p)$ and $p\preccurlyeq q$ follows easily. This proves condition \ref{cond:ra:order_i}.
\end{proof}

Finally, we extend the construction to arbitrary points $x,y$ of $T$:

\begin{lemma}
\label{lm:closed_interval}
Let $x\prec y\in T$, and $\varepsilon>0$. Then there is a rearrangement $(\psi,T')$ of $(M,T)$ with constant $1+\varepsilon$, subordinated to $[x,y]$, such that the set
$$
(\Br(\psi(M))\setminus\psi(M))\cap (x,y)
$$
is countable and its accumulation points are contained in $\psi(M)\cup\set{x,y}$.
\end{lemma}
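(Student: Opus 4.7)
The plan is to decompose $[x,y]$ into the gaps of $M$ and apply Lemma~\ref{lm:open_interval} separately on each gap, obtaining rearrangements that act on pairwise disjoint regions of $T$ and can therefore be composed freely.

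Concretely, I augment $M$ to $M' = M \cup \set{x, y}$ so that all gap endpoints belong to $M'$: the closed set $M' \cap [x,y]$ has complement equal to a countable disjoint union of open intervals $(a_i, b_i)_{i \in I}$ with $a_i, b_i \in M'$. For each $i$, Lemma~\ref{lm:open_interval} applied to $(M', T)$ on $[a_i, b_i]$ with parameter $\varepsilon/2$ produces a rearrangement $(\psi_i, T_i')$ of $(M', T)$ subordinated to $[a_i, b_i]$, of constant $1 + \varepsilon/2$, such that $\Br(\psi_i(M')) \cap (a_i, b_i)$ is countable with accumulation points in $\set{a_i, b_i}$. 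Since $M \subseteq M'$, each $\psi_i$ is also a rearrangement of $(M, T)$ with the same constant.

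Let $R_i$ be the union of connected components of $T \setminus [a_i, b_i]$ whose closure meets $(a_i, b_i)$, so that $\psi_i$ is the identity outside $R_i$; since the gaps are pairwise disjoint, so are the regions $R_i$. I define $\psi \colon T \to T'$ by setting $\psi(p) = \psi_i(p)$ whenever $p \in R_i$ and $\psi(p) = p$ otherwise, with $T'$ built by replacing each $R_i \subset T$ with $\psi_i(R_i) \subset T_i'$ and gluing via Fact~\ref{fact:union}. The subordination conditions \ref{cond:ra:disj}--\ref{cond:ra:ident} for $\psi$ with respect to $[x,y]$ follow from the corresponding conditions for each $\psi_i$, using the inclusions $T_{xy} \subseteq T_{a_i b_i}$, $T_{yx} \subseteq T_{b_i a_i}$, and $[x,y] \subseteq T_{a_i b_i} \cup [a_i, b_i] \cup T_{b_i a_i}$. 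The order conditions \ref{cond:ra:order_p} and \ref{cond:ra:order_i} transfer directly from the $\psi_i$.

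The main technical step is the Lipschitz estimate \ref{cond:ra:ineq}. For $p, q \in M$ lying in distinct regions $R_i, R_j$ (say $b_i \preccurlyeq a_j$), the points $b_i, a_j \in M' \cap [x,y]$ are fixed by every $\psi_k$, so
\[
d(\psi(p), \psi(q)) - d(p, q) = \bigl(d(\psi_i(p), b_i) - d(p, b_i)\bigr) + \bigl(d(a_j, \psi_j(q)) - d(a_j, q)\bigr),
\]
and each summand is non-negative and bounded by $(\varepsilon/2) d(p,q)$, using the rearrangement property of $\psi_i$ and $\psi_j$ applied to the pairs $(p, b_i)$ and $(q, a_j)$ in $M'$, together with $d(p, b_i), d(a_j, q) \leq d(p, q)$. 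The remaining cases (same region, or at least one point fixed by $\psi$) are easier. Finally, $T'$ is separable, so $\Br(\psi(M)) \subseteq \Br(T')$ is countable; any accumulation point of $(\Br(\psi(M)) \setminus \psi(M)) \cap (x,y)$ either lies in the closure of a single gap---hence in $\set{a_i, b_i} \subseteq \psi(M) \cup \set{x,y}$ by Lemma~\ref{lm:open_interval}---or is approached by points from infinitely many distinct gaps, forcing it into $M' \cap [x,y] \subseteq \psi(M) \cup \set{x,y}$. The main obstacle is the cross-gap Lipschitz estimate, though it reduces cleanly to the per-gap bounds provided by Lemma~\ref{lm:open_interval}.
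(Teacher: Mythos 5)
Your proposal is correct and follows essentially the same route as the paper: decompose $(x,y)\setminus M$ into its countably many gaps, apply Lemma~\ref{lm:open_interval} on each gap, glue the resulting trees via Fact~\ref{fact:union}, and verify the Lipschitz bound for cross-gap pairs by anchoring at points of $M$ on the connecting segment (the paper uses a single intermediate point $z\in M\cap(p,q)$ where you use the two gap endpoints $b_i$ and $a_j$, which amounts to the same estimate). Your augmentation $M'=M\cup\set{x,y}$ is a tidy way to guarantee that the gaps adjacent to $x$ or $y$ have endpoints in the reference set, a point the paper passes over silently, but it does not change the substance of the argument.
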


\begin{proof}
Since $M$ is closed, $(x,y)\setminus M$ is the disjoint union of at most countably many open intervals $I_k^o=(x_k,y_k)$, $k\in\NN$, where $x_k,y_k\in M$ and $x_k\prec y_k$. For each $k\in\NN$, use Lemma~\ref{lm:open_interval} to obtain a rearrangement $(\psi_k,T_k)$ of $(M,T)$ subordinated to $I_k=[x_k,y_k]$ and with constant $1+\varepsilon$, such that $\Br(\psi_k(M))\cap I_k^o$ is countable and discrete.
As before, $T_k$ can be taken complete and $T_{xy} \cup[x,y] \cup T_{yx} \subset T_k$ for every $k$. 
Thus, we may define an $\RR$-tree metric on $T'=\bigcup_{k\in\NN}T_k$ as in Fact~\ref{fact:union}
and define $\psi\colon T\rightarrow T'$ by
$$
\psi(p)=\begin{cases}
\psi_k(p) & \text{if $p\in A_k$ for some $k\in\NN$} \\
p & \text{otherwise}
\end{cases}
$$
where $A_k$ is the connected component of $T\setminus (M\cap [x,y])$ that contains $I_k^o$. Then $(\psi,T')$ is a rearrangement of $(M,T)$ subordinated to $[x,y]$, as can be proven mimicking the arguments in the proof of Lemma~\ref{lm:open_interval}. Moreover, given two points $p,q\in M$ in different components of $T\setminus (M\cap [x,y])$ there is always $z\in M\cap (p,q)\cap (\psi(p),\psi(q))$, and one gets that $\psi$ has constant $1+\varepsilon$ by applying the corresponding inequalities to $d(\psi(p),z)$ and $d(\psi(q),z)$. It is also clear that
\begin{align*}
(\Br(\psi(M))\setminus\psi(M))\cap (x,y) &= \Br(\psi(M))\cap ((x,y)\setminus M) \\
&= \bigcup_k \Br(\psi_k(M))\cap I_k^o
\end{align*}
is a countable set and that all of its accumulation points are contained in $\psi(M)\cup\set{x,y}$.
\end{proof}

\subsection{Proof of Theorem \ref{tm:separable_tree_embedding}}

We are now ready to finish the promised proof. For convenience, we restate the theorem here.

\begin{T1}
Let $M$ be a complete separable metric space that is a subset of an $\RR$-tree. Then, for every $\varepsilon>0$, $M$ is $(1+\varepsilon)$-Lipschitz homeomorphic to a subset $N$ of an $\RR$-tree such that $\lambda(N)=\lambda(M)$ and $\lambda(\cl{\Br(N)}\setminus N)=0$, where the closure is taken in $\conv(N)$.
\end{T1}

\begin{proof}
Let $T=\conv(M)$ and choose an element $0\in M$ as its root. Let $(\xi_n)_{n=1}^\infty$ be a dense sequence in $M$, and let $(x_n)_n$ be the subsequence obtained by eliminating all elements $\xi_n$ such that $\xi_n\preccurlyeq\xi_k$ for some $k<n$. We may assume that $(x_n)_n$ is an infinite sequence, otherwise $M$ is compact and $\lambda(\cl{\Br(M)}\setminus M)=0$ by the argument in \cite[Lemma 7]{DaKaPr_2016} so there is nothing to prove. Denote
$$
Q_n=\conv\set{0,x_1,\ldots,x_n}=\bigcup_{k=1}^n [0,x_k]
$$
so that $Q_n\subsetneq Q_{n+1}$ for all $n$. Let
$$
Q=\bigcup_{n=1}^\infty Q_n=\conv(\set{0}\cup\set{x_n:n\in\NN})
$$
and $M'=M\cap Q$. Notice that $M'$ contains all of $M$ except possibly for some leaves of $T$ that are accumulation points of $M'$, hence $Q\cup(M\setminus M')=T$ and $M'$ is dense in $M$. Notice also that $Q=\conv(M')$ and so $\Br(Q)=\Br(M')=\Br(M)$ using Fact \ref{fact:dense_br}.

Choose a sequence $(\varepsilon_n)_{n=1}^\infty$ of strictly positive numbers such that
$$
\prod_{n=1}^\infty (1+\varepsilon_n)\leq 1+\varepsilon .
$$
Apply Lemma \ref{lm:closed_interval} to obtain a rearrangement $(\psi_1,T_1)$ of $(M,T)$ with constant $1+\varepsilon_1$, subordinated to $I_1=[0,x_1]$, such that the set
$$
B_1 = (\Br(\psi_1(M))\setminus\psi_1(M))\cap I_1^o
$$
is countable and each of its accumulation points is either $0$, $x_1$ or an element of $\psi_1(M)$. Now define inductively for each $n\in\NN$
$$
b_n=\max_{1\leq k\leq n}\pare{\Psi_n(x_{n+1})\wedge\Psi_n(x_k)}
$$
where $\Psi_n=\psi_n\circ\ldots\circ\psi_1$ (notice that all elements lie in $[0,\Psi_n(x_{n+1})]$, so it makes sense to take the maximum), and use Lemma \ref{lm:closed_interval} to construct a rearrangement $(\psi_{n+1},T_{n+1})$ of $(\Psi_n(M),T_n)$ that has constant $1+\varepsilon_{n+1}$, is subordinated to the segment
$$
I_{n+1}=[b_n,\Psi_n(x_{n+1})]
$$
and such that the set
$$
B_{n+1} = (\Br(\Psi_{n+1}(M))\setminus\Psi_{n+1}(M))\cap I_{n+1}^o
$$
is countable and each of its accumulation points is either $b_n$, $\Psi_n(x_{n+1})$ or an element of $\Psi_{n+1}(M)$. 
Let us also follow the convention that $b_0=0$ and $\Psi_0$ is the identity on $T$. Note that $\lambda(\Psi_n(M))=\lambda(M)$ for all $n\in\NN$ by Lemma \ref{lm:subrearr_preserves_measure} and induction.

We now claim the following:

\begin{claim}
\label{cl:claim1}
If $m>n$ then $\psi_m$ restricts to the identity on $\bigcup_{k=1}^n [0,\Psi_n(x_k)]$.
\end{claim}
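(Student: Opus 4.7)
The claim reduces to the key base case $m = n+1$: granted that for every $j\geq 1$ the map $\psi_{j+1}$ restricts to the identity on $\bigcup_{k=1}^{j}[0,\Psi_j(x_k)]$, a short induction on $m$ finishes. Indeed, applying this at levels $j = n, n+1, \ldots, m-2$ gives in particular $\Psi_{j+1}(x_k) = \Psi_j(x_k)$ for every $k \leq j$, so by iteration $\Psi_{m-1}(x_k) = \Psi_n(x_k)$ for every $k \leq n$ and the segment $[0,\Psi_n(x_k)]$ coincides with $[0,\Psi_j(x_k)]$ as a subset of every $T_j$ with $j\geq n$. Thus $\bigcup_{k=1}^n [0, \Psi_n(x_k)] \subset \bigcup_{k=1}^{m-1}[0, \Psi_{m-1}(x_k)]$, and applying the base case to $\psi_m$ at level $m-1$ yields the conclusion.

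To prove the base case, set $x := b_n$ and $y := \Psi_n(x_{n+1})$, so that $\psi_{n+1}$ is a rearrangement subordinated to the segment $[x,y] \subset T_n$. Property \ref{cond:ra:ident} then tells us that $\psi_{n+1}$ is the identity on $T_{xy} \cup [x,y] \cup T_{yx}$ (the components being taken inside $T_n$). It therefore suffices to establish, for each $k = 1, \ldots, n$, the inclusion
\[
[0, \Psi_n(x_k)] \;\subset\; T_{xy} \cup [x,y] \cup T_{yx}.
\]

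To check this inclusion, introduce the meet $w_k := y \wedge \Psi_n(x_k)$; since $b_n$ is by definition the maximum of such meets, $w_k \preccurlyeq x$. Decompose $[0, \Psi_n(x_k)] = [0, w_k] \cup [w_k, \Psi_n(x_k)]$. The first piece lies in $[0, x]$, and since $y \succ x$ the half-open segment $[0,x)$ lies in the component of $0$ in $T_n \setminus \{x\}$, which avoids $y$; hence $[0, w_k] \subset T_{xy} \cup \{x\} \subset T_{xy} \cup [x,y]$. For the tail $(w_k, \Psi_n(x_k)]$ I distinguish two cases. If $w_k \prec y$, the tail branches off the segment $[0, y]$ at $w_k \preccurlyeq x$ in a direction different from $y$: when $w_k \prec x$ it remains attached to $0$ after removing $x$, and when $w_k = x$ it forms its own component of $T_n \setminus \{x\}$; either way $(w_k, \Psi_n(x_k)] \subset T_{xy}$. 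If instead $w_k = y$, then $y \preccurlyeq \Psi_n(x_k)$ and removing $y$ separates $\Psi_n(x_k)$ from $x \prec y$, giving $(y, \Psi_n(x_k)] \subset T_{yx}$.

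The main difficulty is the bookkeeping in the above case split on $w_k$, namely correctly identifying which connected component of $T_n \setminus \{x\}$ or of $T_n \setminus \{y\}$ contains each piece of the segment. Once the observation $w_k \preccurlyeq b_n = x$ is extracted from the definition of $b_n$, everything reduces to elementary $\RR$-tree geometry combined with the subordination property \ref{cond:ra:ident}.
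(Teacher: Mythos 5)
Your proof is correct and follows essentially the same route as the paper's: reduce by induction to the case $m=n+1$, then combine the subordination property \ref{cond:ra:ident} with the key observation that $\Psi_n(x_{n+1})\wedge\Psi_n(x_k)\preccurlyeq b_n$ for $k\leq n$. The paper packages the containment more compactly (showing $[0,\Psi_n(x_k)]$ lies in the connected component of $0$ in $T_n\setminus I_{n+1}^o$, which is fixed pointwise), whereas you split the segment at the meet $w_k$ and track components explicitly, but this is only a cosmetic difference.
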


\begin{proof}[Proof of Claim \ref{cl:claim1}]
By induction, it is enough to show that $\psi_{n+1}$ restricts to the identity on $[0,\Psi_n(x_n)]$. Let $U$ be the component of $T_n\setminus I_{n+1}^o$ that contains $0$; since $\psi_{n+1}$ is subordinated to $I_{n+1}$, it will suffice to check that $[0,\Psi_n(x_n)]\subset U$, i.e. that $\Psi_n(x_n)\in U$. To see this, notice that $b_n\in U$ as $b_n\preccurlyeq\Psi_n(x_{n+1})$, and moreover $\Psi_n(x_{n+1})\wedge\Psi_n(x_n)\preccurlyeq b_n$ by definition, therefore $[b_n,\Psi_n(x_n)]\subset U$.
\end{proof}

For every $p\in Q$, define $\Psi(p)=\Psi_n(p)$ where $n\in\NN$ is such that $p\in Q_n$. Claim \ref{cl:claim1} ensures that this definition is independent of the choice of $n$. Let us observe that
$$
\Psi(x_n)=\Psi_n(x_n)=\Psi_{n-1}(x_n)
$$
for any $n\in\NN$. Indeed, the first equality follows from $x_n\in Q_n$, and the second one from the fact that $\psi_n$ is subordinated to a segment containing $\Psi_{n-1}(x_n)$. Note also that all the $\Psi(x_n)$ are different, since the restriction of each $\Psi_n$ to $M$ is injective.

Let $T'$ be the completion of $\bigcup_{n=1}^\infty T_n$, which is an $\RR$-tree by Fact~\ref{fact:union_completion}. It is easy to see that $(\Psi,T')$ is a rearrangement of $(M',Q)$ with constant $1+\varepsilon$. Indeed, let $p,q\in Q$, then we may find $n\in\NN$ such that $p,q\in Q_n$ and therefore $\Psi(p)=\Psi_n(p)$ and $\Psi(q)=\Psi_n(q)$. Since every $\Psi_n$ preserves the order and is an order isomorphism when restricted to $M$, the same is true for $\Psi$. And if $p,q\in M'$, then we have
$$
d(p,q)\leq d(\Psi_n(p),\Psi_n(q))\leq d(p,q)\cdot\prod_{k=1}^n (1+\varepsilon_k)\leq (1+\varepsilon)d(p,q)
$$
for every $n\in\NN$. Now extend $\Psi|_{M'}$ continuously to a mapping $\psi\colon M\rightarrow T'$, and define $\Psi(p)=\psi(p)$ for $p\in M\setminus M'$, then it is clear that $(\Psi,T')$ is a rearrangement of $(M,T)$ with constant $1+\varepsilon$.

Let $N=\Psi(M)=\cl{\Psi(M')}$. Then $N$ is $(1+\varepsilon)$-Lipschitz homeomorphic to $M$ and Lemma \ref{lm:rearr_measure_inequality} implies that $\lambda(N)\geq\lambda(M)$. To prove the reverse inequality, let $\delta>0$ and $J_k$, $k=1,\ldots,n$ be disjoint segments in $T'$ such that
$$
\lambda(N) < \sum_{k=1}^n \lambda(N\cap J_k) + \delta .
$$
As in the proof of Lemma \ref{lm:rearr_measure_inequality}, we may assume that $J_k=[\Psi(p_k),\Psi(q_k)]$ for $p_k,q_k\in M$ and $p_k\preccurlyeq q_k$. In fact, we may assume that $p_k,q_k\in M'$ since $M\setminus M'$ consists of leaves of $T$. Therefore there is $n_0\in\NN$ such that $p_k,q_k\in Q_{n_0}$ for all $k=1,\ldots,n$. This implies that $N\cap J_k\subset\Psi_{n_0}(M)$: indeed, if $a\in N\cap J_k$ then $a=\psi(z)$ for some $z\in M$ and \ref{cond:ra:order_i} implies that $z\in [p_k,q_k]\subset Q_{n_0}$. Hence
$$
\sum_{k=1}^n \lambda(N\cap J_k) \leq \sum_{k=1}^n \lambda(\Psi_{n_0}(M)\cap J_k) \leq \lambda(\Psi_{n_0}(M)) = \lambda(M)
$$
so $\lambda(N)<\lambda(M)+\delta$. Since $\delta$ was arbitrary, $\lambda(N)\leq\lambda(M)$ follows.

To complete the proof of the theorem, it only remains to be shown that $\lambda(\cl{\Br(N)}\setminus N)=0$. In order to do that, let us denote $B=\bigcup_{n=1}^\infty B_n$. We will prove the following statements:

\begin{claim}
\label{cl:claim2}
If $m>n$ then $I_m^o \cap [0,\Psi(x_n)]=\varnothing$.
\end{claim}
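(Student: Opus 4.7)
The plan is to exploit Claim~\ref{cl:claim1} together with the definition of $b_{m-1}$ as a maximum in order to force any hypothetical point of $I_m^o \cap [0,\Psi(x_n)]$ to lie weakly below $b_{m-1}$, which contradicts being strictly above $b_{m-1}$ in $I_m^o$.

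The key preliminary observation is the identity $\Psi_{m-1}(x_n) = \Psi(x_n)$ whenever $m > n$. Recall that by construction $\Psi(x_n) = \Psi_n(x_n)$. For each $j$ with $n < j \leq m-1$, Claim~\ref{cl:claim1} says that $\psi_j$ restricts to the identity on $\bigcup_{k=1}^{j-1}[0,\Psi_{j-1}(x_k)]$; taking $k=n \leq j-1$ and applying this to the point $\Psi_{j-1}(x_n)$ gives $\Psi_j(x_n)=\Psi_{j-1}(x_n)$. Iterating from $j=n+1$ up to $j=m-1$ yields $\Psi_{m-1}(x_n) = \Psi_n(x_n) = \Psi(x_n)$.

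Now, since $m>n$ we have $n \leq m-1$, so the index $k=n$ is admissible in the definition
\[
b_{m-1}=\max_{1\leq k\leq m-1}\pare{\Psi_{m-1}(x_m)\wedge\Psi_{m-1}(x_k)} .
\]
Therefore $\Psi_{m-1}(x_m)\wedge\Psi_{m-1}(x_n)\preccurlyeq b_{m-1}$, and by the identity above this becomes
\[
\Psi_{m-1}(x_m)\wedge\Psi(x_n)\preccurlyeq b_{m-1}.
\]

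Finally, suppose towards contradiction that some $p$ lies in $I_m^o \cap [0,\Psi(x_n)]$. Since $p\in I_m^o=(b_{m-1},\Psi_{m-1}(x_m))$, in particular $p \prec \Psi_{m-1}(x_m)$, and since $p\in [0,\Psi(x_n)]$, we have $p\preccurlyeq \Psi(x_n)$. Thus $p$ is a common lower bound of $\Psi_{m-1}(x_m)$ and $\Psi(x_n)$, so by the definition of the meet
\[
p \preccurlyeq \Psi_{m-1}(x_m)\wedge\Psi(x_n)\preccurlyeq b_{m-1},
\]
contradicting $p \succ b_{m-1}$. The only nontrivial step is the iterated application of Claim~\ref{cl:claim1} to justify $\Psi_{m-1}(x_n)=\Psi(x_n)$; once that is settled, the argument is a direct order-theoretic computation in the $\RR$-tree $T_{m-1}$.
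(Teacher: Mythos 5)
Your proof is correct and follows essentially the same route as the paper's: assume $p\in I_m^o\cap[0,\Psi(x_n)]$, deduce $p\preccurlyeq\Psi(x_m)\wedge\Psi(x_n)\preccurlyeq b_{m-1}$, and contradict $b_{m-1}\prec p$. The only difference is that you spell out, via an iterated application of Claim~\ref{cl:claim1}, the identification $\Psi_{m-1}(x_n)=\Psi(x_n)$ needed to match the meet with the terms in the definition of $b_{m-1}$ --- a step the paper leaves implicit.
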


\begin{claim}
\label{cl:claim3}
$\Br(\Psi(M'))\cap [0,\Psi(x_n)] \subset \Psi(M')\cup B_1\cup\ldots\cup B_n$.
\end{claim}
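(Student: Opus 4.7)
The plan is to prove the claim by induction on $n$ applied to an auxiliary statement about the finite-stage trees, then transfer the conclusion from stage $m$ back to $\Psi$ via a finite-witness argument. Define
\[
  R_n := \bigcup_{k=1}^n [0, \Psi_n(x_k)] \subset T_n;
\]
by Claim~\ref{cl:claim1}, $R_n$ is fixed pointwise by every subsequent $\psi_m$ with $m > n$, so its branching structure is already settled at stage $n$. The intermediate statement is
\[
  P(n): \quad \Br(\Psi_n(M)) \cap R_n \setminus \Psi_n(M) \subset B_1 \cup \cdots \cup B_n.
\]
The base case $P(1)$ is immediate: $R_1 = I_1$, the endpoints $0$ and $x_1$ lie in $\psi_1(M)$, and $(\Br(\psi_1(M)) \setminus \psi_1(M)) \cap I_1^o = B_1$ by the definition of $B_1$.

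For the inductive step, the defining maximality of $b_{n-1}$ yields $\Psi_{n-1}(x_k) \wedge \Psi_{n-1}(x_n) \preccurlyeq b_{n-1}$ for every $k<n$, hence $R_{n-1} \cap [0, \Psi_n(x_n)] = [0, b_{n-1}]$, so that $R_n = R_{n-1} \cup I_n$ with $R_{n-1} \cap I_n = \set{b_{n-1}}$. Take $p \in R_n \cap \Br(\Psi_n(M)) \setminus \Psi_n(M)$. If $p \in I_n^o$ then $p \in B_n$ by the definition of $B_n$, and the endpoint $\Psi_n(x_n)$ lies in $\Psi_n(M)$. Otherwise $p \in R_{n-1}$; here the crucial observation is that $\psi_n$ fixes $R_{n-1}$ pointwise (Claim~\ref{cl:claim1}) while the constructions of Lemmas~\ref{lm:separated_interval}--\ref{lm:closed_interval} attach every new branch of $T_n \setminus T_{n-1}$ strictly inside $I_n^o$. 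This yields a bijection between the components of $T_{n-1} \setminus \set{p}$ and those of $T_n \setminus \set{p}$ (the component containing $b_{n-1}$ absorbs the new branches), and since $\psi_n$ is a bijection from $\Psi_{n-1}(M)$ to $\Psi_n(M)$, the bijection preserves the property of containing an image point. Therefore $p \in \Br(\Psi_n(M)) \setminus \Psi_n(M)$ if and only if $p \in \Br(\Psi_{n-1}(M)) \setminus \Psi_{n-1}(M)$, and the inductive hypothesis closes this case.

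To deduce Claim~\ref{cl:claim3} itself, take $p \in \Br(\Psi(M')) \cap [0, \Psi(x_n)]$ with $p \notin \Psi(M')$ and pick three witnesses $y_1, y_2, y_3 \in M'$ with pairwise meets $\Psi(y_i) \wedge \Psi(y_j) = p$. Each $y_i$ lies in some $Q_{m_i}$; setting $m = \max\set{n, m_1, m_2, m_3}$ yields $\Psi(y_i) = \Psi_m(y_i)$ and $\Psi(x_n) = \Psi_m(x_n)$, so $p \in \Br(\Psi_m(M)) \cap R_m$. The order-isomorphism property of $\Psi_m$ on $M$ gives $\Psi_m(M) \cap [0, \Psi_m(x_n)] = \Psi(M') \cap [0, \Psi(x_n)]$, whence $p \notin \Psi_m(M)$. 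Applying $P(m)$ gives $p \in B_1 \cup \cdots \cup B_m$, and Claim~\ref{cl:claim2} together with $B_k \subset I_k^o$ rules out $k > n$, leaving $p \in B_1 \cup \cdots \cup B_n$. The hardest point will be the inductive step: $\Psi_n$ is generally discontinuous on $T$ (a segment can be split by the motion of subtrees attached at its interior branching points), so $\Psi_n([0,x_k])$ need not be the segment $[0, \Psi_n(x_k)]$; working with the explicitly defined $R_n$ and exploiting the precise geometric fact that new attachments land inside $I_n^o$ is what makes the bijection of components at points of $R_{n-1}$, and hence the passage from $P(n-1)$ to $P(n)$, go through.
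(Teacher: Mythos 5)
Your argument is correct in substance but follows a genuinely different route from the paper's. The paper avoids any induction over stages: given $p\in\Br(\Psi(M'))\setminus\Psi(M')$ with $p\preccurlyeq\Psi(x_n)$, it writes $p=\Psi(x_i)\wedge\Psi(x_j)$ and \emph{minimizes $i$} over all such representations; minimality forces $p\in I_i^o$, and then Claims \ref{cl:claim1} and \ref{cl:claim2} show that the map $\psi_j\circ\cdots\circ\psi_{i+1}$ fixes $p$ and preserves enough order to recognize $p$ as $\Psi_i(x_i)\wedge\Psi_i(x_j)\in\Br(\Psi_i(M))$, whence $p\in B_i$ with $i\leq n$. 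Your proof instead establishes the stagewise statement $P(n)$ on $R_n$ by induction, using a component-counting locality argument at points of $R_{n-1}$, and then transfers to $\Psi$ by a finite-witness argument. What your approach buys is an explicit proof that the branching structure of $\Psi_m(M)$ along $R_{n}$ is frozen from stage $n$ onward --- a fact the paper's argument uses only implicitly --- at the cost of being longer and of requiring the geometric analysis of how $\psi_n$ redistributes points among the components of $T_{n-1}\setminus\set{p}$; the paper's minimality trick sidesteps that analysis entirely.

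Two spots need tightening. First, in the inductive step the sentence ``since $\psi_n$ is a bijection from $\Psi_{n-1}(M)$ to $\Psi_n(M)$, the bijection preserves the property of containing an image point'' is not a consequence of bijectivity: what you actually need, and what is true, is that condition \ref{cond:ra:ident} makes $\psi_n$ the identity off the subtrees hanging from $I_n^o$, and those subtrees together with their images (the new branches) all lie in the single component of $T_{n-1}\setminus\set{p}$, resp.\ $T_n\setminus\set{p}$, containing $I_n^o$ --- so the trace of the image set on every \emph{other} component is literally unchanged, and the $I_n^o$-component always contains $\Psi_n(x_n)$. Spell that out; as written the step is asserted rather than proved. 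Second, in the transfer step three witnesses cannot all have pairwise meets equal to $p$ (the witness lying in the root's component of $\conv(\Psi(M'))\setminus\set{p}$ has meet strictly below $p$ with the others); the correct formulation is two witnesses $y_2,y_3$ with $p\prec\Psi(y_2),\Psi(y_3)$ and $\Psi(y_2)\wedge\Psi(y_3)=p$, together with $0$, which is all your argument actually uses.
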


\begin{claim}
\label{cl:claim4}
$(\cl{B}\setminus N) \cap \Psi(Q)$ is a countable set.
\end{claim}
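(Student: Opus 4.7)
The plan is to prove the inclusion $(\cl{B} \setminus N) \cap \Psi(Q) \subset B \cup \set{b_j : j \geq 0}$, which yields the claim since $B = \bigcup_n B_n$ is a countable union of countable sets and the $b_j$ are evidently countable. Fix $p$ in this set. Since $\Psi$ preserves the order on $Q$, we have $\Psi(Q) \subset \bigcup_k [0, \Psi(x_k)]$, so we may pick $n$ with $p \preccurlyeq \Psi(x_n)$. Write $p = \lim_i a_i$ with $a_i \in B$; passing to a subsequence we land in one of two cases: (a) all $a_i$ lie in a single $B_m$, or (b) $a_i \in B_{n_i}$ with $n_i \to \infty$.

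In case (a), the construction of Lemma~\ref{lm:closed_interval} places the accumulation points of $B_m$ inside $\Psi_m(M) \cup \set{b_{m-1}, \Psi(x_m)}$. If $p = \Psi_m(z) \in \Psi_m(M) \cap I_m$, the order-isomorphism of $\Psi_m$ gives $z \preccurlyeq x_m$, hence $z \in M \cap Q_m \subset M'$, and Claim~\ref{cl:claim1} ensures that $\psi_j$ restricts to the identity on $[0, \Psi(x_m)] \supset I_m$ for every $j > m$, so $\Psi(z) = \Psi_m(z) = p \in N$, contradicting $p \notin N$. Thus $p \in \set{b_{m-1}, \Psi(x_m)}$, and since $\Psi(x_m) \in N$ we conclude $p = b_{m-1} \in \set{b_j}$.

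For case (b), Claim~\ref{cl:claim2} gives $I_{n_i}^o \cap [0, \Psi(x_n)] = \varnothing$ whenever $n_i > n$, so the unique path in $T'$ from $a_i \in I_{n_i}^o$ to $p \in [0, \Psi(x_n)]$ exits $I_{n_i}$ through exactly one endpoint $e_i \in \set{b_{n_i-1}, \Psi(x_{n_i})}$, with $d(a_i, p) = d(a_i, e_i) + d(e_i, p)$; in particular $e_i \to p$. Passing to a further subsequence either $e_i = \Psi(x_{n_i}) \in N$ for all $i$---forcing $p \in N$, which is excluded---or $e_i = b_{n_i-1}$ for all $i$, whence $b_{n_i-1} \to p$. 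The $1$-Lipschitz metric projection $\pi_n\colon T' \to [0, \Psi(x_n)]$ from Fact~\ref{fact:metricr_projection} then yields $c_i := \pi_n(b_{n_i-1}) = b_{n_i-1} \wedge \Psi(x_n) \to p$ inside $[0, \Psi(x_n)]$.

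It remains to identify the $c_i$ using Claim~\ref{cl:claim3}. A case analysis on whether $c_i$ equals $0$, $\Psi(x_n)$, $b_{n_i-1}$, or is a strict meet shows that three distinct directions at $c_i$ each contain a point of $\Psi(M')$---towards $0$, towards $\Psi(x_n)$, and towards $\Psi(x_{n_i})$ via $b_{n_i-1} \preccurlyeq \Psi(x_{n_i})$---so $c_i \in \Psi(M') \cup \Br(\Psi(M'))$, and Claim~\ref{cl:claim3} places $c_i \in \Psi(M') \cup B_1 \cup \ldots \cup B_n$. Reapplying the case~(a) analysis to each $B_m$ with $m \leq n$ gives $\cl{B_1 \cup \ldots \cup B_n} \subset B_1 \cup \ldots \cup B_n \cup N \cup \set{b_j : j < n}$; combined with $\cl{\Psi(M')} = N$, this yields $p \in N \cup B_1 \cup \ldots \cup B_n \cup \set{b_j : j < n}$, so $p \in B \cup \set{b_j}$ as required. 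The main technical obstacle is the tree-geometric verification that $c_i$ is always a branching point of $\Psi(M')$ (or lies in $\Psi(M')$), which requires carefully handling each possible degeneration of the meet $b_{n_i-1} \wedge \Psi(x_n)$ and invoking the fact that every $b_j$ is itself a meet of two $\Psi(x_k)$'s.
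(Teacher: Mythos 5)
Your argument is correct and follows essentially the same route as the paper's: approximate $p$ by a sequence in $B$, push it into $[0,\Psi(x_n)]$ by taking meets with $\Psi(x_n)$, recognize the resulting points as elements of $\Psi(M')\cup\Br(\Psi(M'))$ so that Claim~\ref{cl:claim3} applies, and finish using the accumulation-point structure of each $B_m$ guaranteed by Lemma~\ref{lm:closed_interval}. The differences are only organizational: you split cases according to whether the approximating indices stay bounded rather than whether the points stay in $[0,\Psi(x_n)]$, and you end up with the slightly sharper inclusion $(\cl{B}\setminus N)\cap\Psi(Q)\subset B\cup\set{b_j: j\geq 0}$ where the paper settles for countability.
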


Using these claims, we finish our proof as follows. Since any element of $\Br(\Psi(M'))$ necessarily belongs to some $[0,\Psi(x_n)]$, Claim \ref{cl:claim3} implies that $\Br(\Psi(M'))\subset\Psi(M')\cup B$. By Fact \ref{fact:dense_br} we have $\Br(N)=\Br(\Psi(M'))$, and so we get $\cl{\Br(N)}\subset N\cup\cl{B}$. Thus, it is enough to show that $\cl{B}\setminus N$ is a negligible subset of $T'$. Claim~\ref{cl:claim4} shows that the intersection of $\cl{B}\setminus N$ with any segment of the form $[0,x]$, $x\in\Psi(M')$ is negligible, hence also for $x\in N$. It follows that $\lambda(\cl{B}\setminus N)=0$ and this completes the proof of the theorem.
\end{proof}

\begin{proof}[Proof of Claim \ref{cl:claim2}]
Suppose $p\in I_m^o$ is such that $p\preccurlyeq\Psi(x_n)$. Since $p\prec\Psi(x_m)$ we get $p\preccurlyeq\Psi(x_m)\wedge\Psi(x_n)\preccurlyeq b_{m-1}$, but this contradicts $b_{m-1}\prec p$.
\end{proof}

\begin{proof}[Proof of Claim \ref{cl:claim3}]
Suppose $p\in\Br(\Psi(M'))\setminus\Psi(M')$ is such that $p\preccurlyeq\Psi(x_n)$. Then $p=\Psi(x_n)\wedge\Psi(q)$ for some $q\in M'$. Since $q\in Q$, we have $p=\Psi(x_n)\wedge\Psi(x_m)$ for some $m\in\NN$ such that $q\preccurlyeq x_m$. Now let $i,j$ be chosen to minimize the value of $i$ among all representations of $p$ of the form $p=\Psi(x_i)\wedge\Psi(x_j)$ where $i<j$. We will show that $p\in B_i$. This will prove the claim, as obviously $i\leq n$.

First, let us see that $p\in I_i^o$. Indeed, this is obvious for $i=1$, and for $i>1$ the contrary would imply that
$$
p\preccurlyeq b_{i-1}=\Psi(x_i)\wedge\Psi(x_k)\preccurlyeq\Psi(x_k)
$$
for some $k<i$, and so
$$
p=\Psi(x_i)\wedge\Psi(x_j)=b_{i-1}\wedge\Psi(x_j)=\Psi(x_k)\wedge\Psi(x_j)
$$
contradicting the minimality of $i$.

Now notice that $p\in\Br(\Psi_j(M))$. By Claim \ref{cl:claim1}, the map $\phi=\psi_j\circ\psi_{j-1}\circ\ldots\circ\psi_{i+1}$ restricts to the identity on $[0,\Psi(x_i)]$ and thus $\phi(p)=p$. Since $\phi$ preserves the order, and moreover $p\notin I_k^o$ for $i<k\leq j$ by Claim \ref{cl:claim2}, we have $p=\Psi_i(x_j)\wedge\Psi_i(x_i)\in\Br(\Psi_i(M))$. But $p\in I_i^o$ so we have either $p\in B_i$ or $p\in\Psi_i(M)$, and the latter is excluded because it implies $p\in\Psi(M')$. This finishes the proof.
\end{proof}

\begin{proof}[Proof of Claim \ref{cl:claim4}]
Let $k\in\NN$ and $q\in\cl{B}\cap [0,\Psi(x_k)]$. Then there is a sequence $(q_n)_{n=1}^\infty$ in $B$ that converges to $q$, therefore $q_n\wedge\Psi(x_k)$ converges to $q\wedge\Psi(x_k)=q$. At least one of the following three options must hold:
\begin{itemize}
\item $q=\Psi(x_k)\in\Psi(M')\cap [0,\Psi(x_k)]$.
\item We may choose a subsequence $(q_{n_i})$ of $(q_n)$ such that $q_{n_i}\in [0,\Psi(x_k)]$ for all $i$. Then we have $q\in\cl{(B_1\cup\ldots\cup B_k)\cap [0,\Psi(x_k)]}$ by Claim \ref{cl:claim3}. 
\item We may choose a subsequence $(q_{n_i})$ of $(q_n)$ such that $q_{n_i}\notin [0,\Psi(x_k)]$ and $q_{n_i}\wedge\Psi(x_k)\neq\Psi(x_k)$ for all $i$. Then $q_{n_i}\wedge\Psi(x_k)\in\Br(\Psi(Q))$ and thus
$$
q\in\cl{\Br(\Psi(Q))\cap [0,\Psi(x_k)]}\subset\cl{\Br(\Psi(M'))\cap [0,\Psi(x_k)]}
$$
where we use Fact \ref{fact:order_preservation}.
\end{itemize}
Using Claim \ref{cl:claim3}, we get that
$$
\cl{B}\cap [0,\Psi(x_k)] \subset N \cup \bigcup_{n=1}^k \cl{B_n \cap [0,\Psi(x_k)]} .
$$
which covers all three cases. By construction, $\cl{B_n}$ is the union of a countable set and a subset of $\Psi_n(M)$. Hence $(\cl{B}\setminus N)\cap [0,\Psi(x_k)]$ is countable. The claim now follows since
$$
(\cl{B}\setminus N)\cap\Psi(Q)\subset\bigcup_{k=1}^\infty\big( (\cl{B}\setminus N)\cap [0,\Psi(x_k)] \big)
$$
by Fact \ref{fact:order_preservation}.
\end{proof}

\section{The proper case and extremal structure}

Now we turn to the study of the linear structure of $\F M$ when $M$ is a \emph{proper} subset of an $\RR$-tree with length measure 0. To this aim, we need to introduce the space of little Lipschitz functions.
In the literature, there are conflicting definitions of the little Lipschitz spaces. Here we choose to follow the book by Weaver (see Chapter 4 in \cite{Weaver2}) and then we will comment on the links there are with other definitions.

\begin{definition}
Let $(M,d)$ be a pointed metric space and let  $f\in\Lip_0(M)$. We will say that $f$ is 
\begin{itemize}
    \item \emph{locally flat} if for every $p\in M$ and every $\varepsilon >0$, there exists $\delta >0$ such that
    $$x,y \in  B(p,r) \quad \implies \quad |f(x) - f(y)| \leq \varepsilon d(x,y).$$
In other words, $\lim_{r\rightarrow 0}\lipnorm{f|_{B(p,r)}}=0$ for every $p\in M$.

    \item \emph{uniformly locally flat} if for every $\varepsilon >0$, there exists $\delta >0$ such that
    $$ d(x,y) \leq \delta \quad \implies \quad |f(x) - f(y)| \leq \varepsilon d(x,y).$$

    \item \emph{flat at infinity} if for every 	$\varepsilon > 0$ there exists a compact set $K \subset M$ such that
$$x,y \not\in K \quad \implies \quad  |f(x) - f(y)| \leq \varepsilon d(x,y).$$
\end{itemize}
\end{definition}
Note that if $M$ is proper, then we may replace the compact set $K$ in the last statement by a ball $B(0,r)$ of some radius $r>0$. More precisely, for a proper metric space $M$, $f\in \Lip_0(M)$ is flat at infinity if $\lim_{r\rightarrow\infty}\lipnorm{f|_{M\setminus B(0,r)}}=0$. We now introduce the so called space of little Lipschitz functions.

\begin{definition}
Let $\lip_0(M)$ be the subspace of all functions in $\Lip_0(M)$ that are uniformly locally flat and flat at infinity. 
\end{definition}

It follows from \cite[Lemma 4.16]{Weaver2} that if $f$ is flat at infinity then it is uniformly locally flat if and only if it is locally flat. Note that every $f \in \Lip_0(M)$ is flat at infinity when $M$ is compact, hence $\lip_0(M)$ consists of the locally flat elements of $\Lip_0(M)$ in that case, which is consistent with the notation used elsewhere. In other references e.g. \cite{Dalet_2015_2,GaPeRu_2017,Petitjean_2017}, the space $\lip_0(M)$ is denoted $S_0(M)$ (while $\lip_0(M)$ encompasses just those elements of $\Lip_0(M)$ that are uniformly locally flat). In fact, the definition that these authors give for $S_0(M)$ slightly differs from ours in full generality, but it coincides whenever $M$ is proper (see \cite[Lemma 4.18]{Weaver2}).

\smallskip

In \cite[Theorem 3.8]{Dalet_2015_2}, Dalet proves that $\lip_0(M)$ is an isometric predual of $\lipfree{M}$ whenever $M$ is proper and ultrametric, i.e. it satisfies the strong triangle inequality
$$
d(x,z)\leq\max\set{d(x,y),d(y,z)}
$$
for every $x,y,z\in M$. It is immediate that every ultrametric space $M$ satisfies the four point condition \eqref{eq:4pc} and that every metric segment in $M$ is trivial, i.e. only contains the endpoints, so $M$ is a subset of an $\RR$-tree such that $\lambda(M)=0$. The following theorem can therefore be regarded as a generalization of Dalet's result.

\begin{theorem}
\label{th:proper_trees}
Let $M$ be an infinite proper metric space that is a subset of an $\RR$-tree. Then the following are equivalent:
\begin{enumerate}[label={\upshape{(\roman*)}}]
\item $\lambda(M)=0$,
\item $\lipfree{M}$ is isomorphic to $\ell_1$,
\item $\lipfree{M}$ is a dual space,
\item $\lipfree{M}=\dual{\lip_0(M)}$,
\end{enumerate}
and if they hold, then $\lip_0(M)$ is isomorphic to $c_0$. If $M$ is compact, then the following condition is also equivalent:
\begin{enumerate}[label={\upshape{(\roman*)}}]
\setcounter{enumi}{4}
\item $\lipfree{M}$ is isometric to a subspace of $\ell_1$.
\end{enumerate}
\end{theorem}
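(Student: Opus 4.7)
The plan is to establish the cycle $\mathrm{(ii)} \Rightarrow \mathrm{(iii)} \Rightarrow \mathrm{(i)} \Rightarrow \mathrm{(iv)} \Rightarrow \mathrm{(ii)}$, with the assertion $\lip_0(M) \cong c_0$ and the extra equivalence with $\mathrm{(v)}$ in the compact case treated on the side. The implication $\mathrm{(ii)} \Rightarrow \mathrm{(iii)}$ is immediate since $\ell_1 = c_0^*$ is a dual space. For $\mathrm{(iii)} \Rightarrow \mathrm{(i)}$, properness of $M$ forces $M$ to be separable and so $\F(M)$ is separable; if $\F(M) \cong Y^*$ then $Y$ must be separable too, so $\F(M)$ is a separable dual and carries the Radon--Nikod\'ym property. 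By Corollary~\ref{c:RNPSchurL1}, this forces $\lambda(M) = 0$.

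The heart of the argument is to handle $\mathrm{(i)} \Rightarrow \mathrm{(iv)}$ together with $\mathrm{(iv)} \Rightarrow \mathrm{(ii)}$ via the annular decomposition already used in Example~\ref{ex:dense_branching}. Set $M_k = \set{x \in M : 2^{k-1} \leq d(0,x) \leq 2^k}$; properness makes each $M_k$ compact, and by hypothesis each is negligible, so \cite[Lemma~7]{DaKaPr_2016} gives $\lambda(\cl{\Br(M_k)}) = 0$, whence $N_k := M_k \cup \cl{\Br(M_k)}$ is compact negligible and contains all its branching points. By \cite[Corollary~3.4]{Godard_2010}, $\F(N_k)$ is isometric to some $\ell_1(\Gamma_k)$ with $\Gamma_k$ countable, i.e.\ $\F(N_k) \cong \ell_1$. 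I would then realize $\F(M_k)$ as a complemented subspace of $\F(N_k)$ by constructing a bounded Lipschitz retraction $N_k \to M_k$ from the tree structure (sending each added branching point $b \in \cl{\Br(M_k)} \setminus M_k$ to a point of $M_k$ chosen from one of its branches), so that $\F(M_k) \cong \ell_1$ by Pe\l{}czy\'nski's decomposition theorem and, in parallel, $\lip_0(M_k) \cong c_0$. Assembling through Kalton's Proposition~4.3 of~\cite{Kalton_2004}, $\F(M)$ is isomorphic to a complemented subspace of $\bigoplus_{\ell_1} \F(M_k) \cong \bigoplus_{\ell_1} \ell_1 \cong \ell_1$, proving $\mathrm{(ii)}$; the parallel $c_0$-sum identification $\lip_0(M) \cong \bigoplus_{c_0} \lip_0(M_k) \cong c_0$ yields $\F(M) = \lip_0(M)^*$ as in $\mathrm{(iv)}$ and the promised identification of the predual with $c_0$.

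The equivalence with $\mathrm{(v)}$ in the compact case is immediate from Theorem~\ref{t:char_of_embeddability}: for compact $M$ with $\lambda(M) = 0$, \cite[Lemma~7]{DaKaPr_2016} yields $\lambda(\cl{\Br(M)}) = 0$, so (i) entails (v); conversely (v) trivially gives (iii) since $\ell_1$ is a dual, closing the loop.

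The main technical obstacle is the complementation step: metric projection onto a subtree of an $\RR$-tree is automatically $1$-Lipschitz, but $M_k$ is typically not a subtree, so one must make a branch-by-branch selection at each added branching point and verify that the resulting retraction has uniformly bounded Lipschitz constant (e.g.\ by a selection that is locally constant on nearby branching points). Parallel care is required to control the restriction map at the level of $\lip_0$, so as to obtain the isomorphism $\lip_0(M_k) \cong c_0$ along the lines of Dalet's basis construction in~\cite{Dalet_2015_2}.
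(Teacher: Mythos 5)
Your outer structure is reasonable and two of your steps are genuinely fine: (ii)$\Rightarrow$(iii) is trivial, and your (iii)$\Rightarrow$(i) (separable dual $\Rightarrow$ Radon--Nikod\'ym property $\Rightarrow$ $\lambda(M)=0$ via Corollary~\ref{c:RNPSchurL1}) is a valid alternative to the paper's route. But the two implications that carry the real content of the theorem are not actually proved. The most serious gap is (i)$\Rightarrow$(iv). Statement (iv) is that $\lipfree{M}$ is the dual of $\lip_0(M)$ \emph{under the canonical pairing}; this is not implied by an isomorphic identification $\lip_0(M)\cong c_0$, even if you had one. The standard criterion (used in the paper, following Weaver and Dalet) is that $\lip_0(M)$ must separate points of $M$ $1$-uniformly, and verifying this is where the hypothesis $\lambda(M)=0$ enters: the paper explicitly builds, for given $x\neq y$ and $\varepsilon>0$, a locally flat, flat-at-infinity function (a step function along $[x,y]$ composed with the metric projection onto that segment) with $\lipnorm{f}\le 1$ and $f(y)-f(x)\ge d(x,y)-\varepsilon$. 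Your proposal replaces this by an unproved claim that $\lip_0(M)\cong\bigoplus_{c_0}\lip_0(M_k)$ and that this ``yields'' (iv); neither the $c_0$-sum decomposition of $\lip_0$ nor the passage from it to the isometric duality is justified, and the latter cannot be, since being isomorphic to $c_0$ does not identify which space is the predual in the canonical duality.

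The second gap is in your route to (ii). Kalton's Proposition~4.3 gives only that $\lipfree{M}$ is isomorphic to a \emph{subspace} of $\bigl(\sum_k\lipfree{M_k}\bigr)_{\ell_1}$, not a complemented one, so even granting $\lipfree{M_k}\cong\ell_1$ you only get embeddability into $\ell_1$ (which is Theorem~\ref{th:lipfree_tree_embedding}, not (ii)). To upgrade to an isomorphism with $\ell_1$ you need complementation in an $L_1$- or $\ell_1$-space from elsewhere: the paper gets it by extending $\delta_M$ to the whole tree via Matou\v{s}ek's extension theorem, realizing $\lipfree{M}$ as a complemented subspace of $\lipfree{T}\cong L_1$, and then applying Lewis--Stegall to the separable dual $\lipfree{M}$. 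The same extension theorem also disposes of your ``main technical obstacle'': you do not need a Lipschitz retraction $N_k\to M_k$ (whose existence with uniform constant is doubtful in general), only a Lipschitz extension of $\delta_{M_k}$ to $N_k$. Finally, your closing step ``(v) trivially gives (iii) since $\ell_1$ is a dual'' is false as stated: a closed subspace of a dual space need not be a dual space (e.g.\ $c_0\subset\ell_\infty$). The correct cheap implication is (v)$\Rightarrow$(i), since $L_1$ embeds isometrically into $\lipfree{M}$ whenever $\lambda(M)>0$ and $L_1$ does not embed into $\ell_1$.
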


Under the same assumptions, it is clear that conditions (i)--(iv) are also equivalent to any of the following ones:
\begin{enumerate}
    \item[(vi)] $\lipfree{M}$ has the Schur property,
    \item[(vii)] $\lipfree{M}$ has the Radon-Nikod\'ym property,
    \item[(viii)] $\lipfree{M}$ does not contain $L_1$.
\end{enumerate}
The equivalence of (i), (vi), (vii) and (viii) stays true even if we remove the assumption of properness, see Corollary~\ref{c:RNPSchurL1}.
When $M$ is finite all of the above properties are trivially satisfied, replacing $c_0$ and $\ell_1$ by their finite-dimensional counterparts.

\begin{proof}[Proof of Theorem \ref{th:proper_trees}]
Let $T$ be a separable $\RR$-tree containing $M$.

(i)$\Rightarrow$(iv): According to arguments in \cite{Dalet_2015_2} or \cite[Theorem 4.38]{Weaver2}, we only need to show that $\lip_0(M)$ separates points of $M$ 1-uniformly, that is, given $x\neq y \in M$ and $\varepsilon>0$, we will find $f\in\lip_0(M)$ such that $\lipnorm{f}\leq 1$ and $f(y)-f(x)\geq d(x,y)-\varepsilon$.

Let $I$ be the segment $[x,y]\in T$.
It is clear that $\phi_{xy}(I\setminus M)$ is the union of a (possibly finite) sequence of disjoint open subintervals $(I_n)_{n=1}^\infty$ of $[0,d(x,y)]$, and we have $\sum_{n=1}^\infty \lambda(I_n) = d(x,y)$.
Let $N$ be such that $\sum_{n=N+1}^\infty \lambda(I_n)\leq\varepsilon$.
We can now assume that $I_n=(a_n,b_n)$ for $n\leq N$ and that $a_1<b_1\leq a_2<\ldots<b_N$.
Define $g\colon\RR\rightarrow\RR$ by
$$
g=\sum_{n=1}^N \pare{\sum_{k=1}^n d(a_k,b_k)} \mathbf 1_{[b_n,a_{n+1}]}
$$
with the convention $a_{N+1}=d(x,y)$.
It is easy to see that the restriction of $g$ to $\phi_{xy}(I\cap M)$ is 1-Lipschitz and locally constant.
Extend this restriction to $h\colon [0,d(x,y)]\rightarrow\RR$ with $\lipnorm{h}=1$.

Now let $f$ be the restriction of $h\circ\phi_{xy}\circ\pi$ to $M$, where $\pi\colon T\rightarrow I$ is the metric projection onto $I$ (see Fact \ref{fact:metricr_projection}). Then $\lipnorm{f}=1$ and
\begin{eqnarray*}
f(y)-f(x) = \sum_{k=1}^N d(a_k,b_k)  &=&  \sum_{n=1}^{\infty} \lambda(I_n) - \sum_{n=N+1}^{\infty} \lambda(I_n) \\
&\geq&  d(x,y)-\varepsilon.
\end{eqnarray*}
Moreover, it is clear that $f$ is locally constant at every $p\in M$, so it is locally flat.
Finally we check that $f$ is flat at infinity.
Let $r>0$ and $K=\set{p\in M:d(p,I)\leq r}$, and suppose that $p,q\in M\setminus K$.
If $p,q$ lie on the same connected component of $T\setminus I$, then $f(p)=f(q)$.
Otherwise, $d(p,q)\geq 2r$ and so
$$
\frac{\abs{f(p)-f(q)}}{d(p,q)}\leq\frac{d(x,y)}{2r} \,.
$$
So, subtracting a constant if necessary, we get $f\in\lip_0(M)$ and this ends the proof.

(iv)$\Rightarrow$(iii): This is trivial.

(iii)$\Rightarrow$(ii): 
We use a variation of the argument in \cite{Dalet_2015_2}. By a result in \cite{Matousek_1990}, vector-valued Lipschitz mappings on $M$ may be extended to $T$ while increasing their Lipschitz constant by a universal factor. 
Use this to extend the isometric embedding $\delta_M\colon M\rightarrow\lipfree{M}$ to a Lipschitz mapping $f\colon T\rightarrow\lipfree{M}$, then apply the universal property of Lipschitz-free spaces \cite[Theorem 3.6]{Weaver2} to obtain an operator $F\colon\lipfree{T}\rightarrow\lipfree{M}$ such that $F\circ\delta_T=f$. Then $F(\delta_M(x))=F(\delta_T(x))=f(x)=\delta_M(x)$ for all $x\in M$, so $F$ is a projection onto $\lipfree{M}$. This shows that $\lipfree{M}$ is complemented in $\lipfree{T}$, which is isometric to $L_1(T)$ by \cite[Corollary 3.3]{Godard_2010}. We finish by applying \cite[Theorem 2]{LeSt_1973}, which states that if a complemented subspace of an $L_1$ space is a separable dual then it must be isomorphic to $\ell_1$.

(ii)$\Rightarrow$(i) and (v)$\Rightarrow$(i): Suppose that $\lambda(M)>0$. Then $\lipfree{M}$ is isomorphic to $L_1$ by \cite[Corollary 3.4]{Godard_2010}, so it cannot be isomorphic to a subspace of $\ell_1$.

If $M$ is compact then the implication (i)$\Rightarrow$(v) is contained in \cite[Proposition 8]{DaKaPr_2016}.

Finally, notice that, since $M$ is proper, $\lip_0(M)$ is isomorphic to a subspace of $c_0$ by \cite[Lemma 3.9]{Dalet_2015_2} (we remark that a correct proof of this lemma appears in \cite{Dalet_2015_arxiv}). If conditions (ii) and (iv) hold then $\dual{\lip_0(M)}$ is isomorphic to $\ell_1$, so $\lip_0(M)$ is a $\mathscr{L}_\infty$ space and the results in \cite{JoZi_1972} imply that it is actually isomorphic to $c_0$.
\end{proof}

We conclude this section by characterizing the extreme points of the ball of $\lipfree{M}$ when $M$ is a subset of an $\RR$-tree. In \cite[Theorem 2]{KaFo_1983}, Kadets and Fonf proved that if a Banach space $X$ is isometric to a subspace of $\ell_1$ then every extreme point of $\ball{X}$ is strongly extreme (or MLUR, using their notation) and hence preserved \cite{GuMoZi_2014}. Here we adapt this theorem to subspaces of $L_1(\mu)$. We will use the following fact \cite[Proposition 9.1]{GuMoZi_2014}: $x$ is a preserved extreme point of $B_X$ if and only if given two sequences $(y_n)_{n=1}^\infty$ and $(z_n)_{n=1}^\infty$ in $B_X$ such that $\frac{1}{2}(y_n+z_n)\rightarrow x$ one must have $y_n\wconv x$.

\begin{theorem}\label{t:MLURallMeasures}
Let $(\Omega,\Sigma,\mu)$ be a 
measure space and $X$ be a subspace of $L_1(\mu)$. Then every extreme point of $B_X$ is preserved.
\end{theorem}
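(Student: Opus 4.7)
The plan is to verify the sequence characterization of preserved extremity recalled just before the theorem statement: given $x \in \mathrm{ext}(B_X)$ and sequences $(y_n), (z_n) \subset B_X$ with $\frac12 (y_n + z_n) \to x$ in norm, I would show $y_n \wconv x$ in the weak topology of $X$ (equivalently of $L_1(\mu)$, since $X$ is a subspace). Normalizing so that $\|x\| = 1$ and fixing a nonprincipal ultrafilter $\mathcal U$ on $\NN$, the first move is to take the weak-$\ast$ limits $\tilde y$, $\tilde z$ of $(y_n)$ and $(z_n)$ along $\mathcal U$ inside $L_1(\mu)^{\ast\ast} = L_\infty(\mu)^{\ast}$. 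Both limits lie in $B_{X^{\ast\ast}}$, because $X^{\ast\ast}$ is weak-$\ast$ closed in $L_1(\mu)^{\ast\ast}$, and norm convergence of the midpoints forces $\tilde y + \tilde z = 2x$. Combined with $\|\tilde y\|, \|\tilde z\| \leq 1$, this yields $\|\tilde y\| = \|\tilde z\| = 1$ and the equality $\|\tilde y + \tilde z\| = \|\tilde y\| + \|\tilde z\|$.

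The key structural input is that $L_1(\mu)^{\ast\ast}$ is itself an AL-space (by Kakutani's representation theorem) in which $L_1(\mu)$ sits as a projection band; this is the Yosida--Hewitt decomposition $L_1(\mu)^{\ast\ast} = L_1(\mu) \oplus_1 S$, where $S$ is the purely finitely additive part and the direct sum is isometric for the AL-norm. Writing $\tilde y = y^\ast + y^\sharp$ and $\tilde z = z^\ast + z^\sharp$ in this decomposition, the equation $\tilde y + \tilde z = 2x \in L_1(\mu)$ forces $y^\sharp + z^\sharp = 0$ in $S$, and the triangle equality from the previous step yields $\|y^\sharp\|_S + \|z^\sharp\|_S = \|y^\sharp + z^\sharp\|_S = 0$. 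Hence $\tilde y = y^\ast$ and $\tilde z = z^\ast$ already belong to $L_1(\mu)$. A Hahn--Banach argument then identifies $X^{\ast\ast} \cap L_1(\mu) = X$ inside $L_1(\mu)^{\ast\ast}$ (any element of $L_1(\mu)$ annihilating $X^\perp \subset L_\infty(\mu)$ lies in the closed subspace $X$), so in fact $\tilde y, \tilde z \in B_X$.

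At this stage extremity of $x$ in $B_X$ applies: from the genuine decomposition $x = \frac12 (\tilde y + \tilde z)$ inside $B_X$ I conclude $\tilde y = \tilde z = x$. Since $\tilde y \in L_1(\mu)$, weak-$\ast$ convergence of $(y_n)$ to $\tilde y$ along $\mathcal U$ in $L_1(\mu)^{\ast\ast}$ is the same as weak convergence to $x$ in $L_1(\mu)$, so $y_n \to x$ weakly along $\mathcal U$. As $\mathcal U$ was an arbitrary nonprincipal ultrafilter, a routine subsequence argument upgrades this to weak convergence of the whole sequence $(y_n)$ to $x$ in $L_1(\mu)$, and hence to weak convergence in $X$. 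The main obstacle is the second paragraph: killing the singular parts of $\tilde y, \tilde z$ relies on $L_1(\mu)$ being a projection band inside its bidual together with the equality case of the triangle inequality in an AL-space, which together prevent any mass of $\tilde y$ or $\tilde z$ from escaping into the purely finitely additive part of $L_1(\mu)^{\ast\ast}$.
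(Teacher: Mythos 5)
Your argument is correct, but it takes a genuinely different route from the paper. The paper argues by contradiction: it first uses the Eberlein--\v Smulian theorem and extremality to show that the offending sequence $(g_n)$ cannot be relatively weakly compact, then (for a probability measure) invokes the Dunford--Pettis criterion to extract a failure of equi-integrability and derives a quantitative contradiction with $\int_{\Omega\setminus A_n}\abs{f}\,d\mu\to 1$; the case of a general measure is then reduced to the probability case via the Bohnenblust--Kakutani--Nakano decomposition of $L_1(\mu)$ into an $\ell_1$-sum of $L_1$'s of finite measures. You instead pass to ultrafilter limits $\tilde y,\tilde z$ in the bidual and exploit the fact that $L_1(\mu)$ is an $L$-summand of its bidual: the equality case of the triangle inequality in the $\oplus_1$-decomposition kills the singular parts, a Hahn--Banach argument puts $\tilde y,\tilde z$ back into $B_X$ (this is where closedness of $X$ is used), and extremality of $x$ in $B_X$ finishes the proof. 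Your approach buys generality and avoids the compactness/integrability machinery entirely: it shows that every extreme point of the ball of a closed subspace of \emph{any} $L$-embedded space is preserved, with $L_1(\mu)$ just a special case. The paper's approach is more hands-on and self-contained at the level of measure theory. One presentational caveat: the identifications $L_1(\mu)^{**}=L_\infty(\mu)^*$ and ``$S=$ purely finitely additive part'' via Yosida--Hewitt are only literally valid for $\sigma$-finite (more generally, localizable) $\mu$; for an arbitrary measure space you should either first reduce to the $\sigma$-finite case exactly as the paper does in its last paragraph, or simply quote the standard fact that every $L_1(\mu)$ (indeed every abstract $L$-space) is $L$-embedded, i.e.\ $L_1(\mu)^{**}=L_1(\mu)\oplus_1 S$ for some closed subspace $S$ --- which is all your second paragraph actually uses. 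With that adjustment the proof is complete.
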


Notice that the theorem can be applied directly to $C(K)^*$ where $K$ is a compact Hausdorff space, since this is also an $L_1$-space as is well known (see \cite{AlKa}).

\begin{proof}
Suppose $f$ is an extreme point of $B_X$ but it is not preserved.
Then there exist sequences $(g_n)_{n=1}^\infty$, $(h_n)_{n=1}^\infty$ in $X$ such that
$\norm{g_n}_1 \to 1$ and $\norm{h_n}_1 \to 1$, $\frac{1}{2}(g_n+h_n)=f$ for all $n\in\NN$, and $(g_n)_{n=1}^\infty$ does not converge weakly to $f$.
Assume that $\set{g_n:n\in\NN}$ is relatively weakly compact. 
Then by the Eberlein-\v Smulian theorem every subsequence of $(g_n)_{n=1}^\infty$ admits a further subsequence, say $(g_{n_k})$, which weakly converges to some $g\in B_X$. 
It follows that $h_{n_k}=2f-g_{n_k}$ also weakly converges to $2f-g\in B_X$. 
Since $f$ is extreme we get that $g=f$. 
This implies that $(g_n)_{n=1}^\infty$ converges weakly to $f$ which is a contradiction.
Thus $\set{g_n:n\in\NN}$ is not relatively weakly compact.

First, let us finish the proof for $\mu$ being a probability measure.
We may apply the Dunford-Pettis theorem to conclude that $\set{g_n:n\in\NN}$ is not equi-integrable.
Hence, there exists $\varepsilon>0$ such that for every $n$ there are $A_n \subset \Omega$ and $k_n>k_{n-1}$ such that $\mu(A_n)\leq \frac1n$ and $\int_{A_n}\abs{g_{k_n}}\,d\mu\geq \varepsilon$.
When $n$ is large enough that $\norm{g_{k_n}}_1$ and $\norm{h_{k_n}}_1$ are smaller than $1+\frac{\varepsilon}{4}$, it follows that $\int_{\Omega \setminus A_n}\abs{g_{k_n}}\,d\mu\leq 1-\frac{3\varepsilon}{4}$ and
$$
1-\frac{\varepsilon}4\geq \int_{\Omega \setminus A_n} \abs{\frac{g_{k_n}+h_{k_n}}2}\,d\mu=\int_{\Omega \setminus A_n} \abs{f} d\mu \to 1 .
$$ 
This contradiction finishes the proof.

Now assume that $(\Omega,\Sigma,\mu)$ is a general 
measure space.
It follows from the Bohnenblust-Kakutani-Nakano theorem (see \cite[4.8.3.3]{Pietsch_2007} or \cite[p. 136]{Lacey_1974}) that there exist finite measures $(\mu_i)_{i \in I}$ such that
$$
L_1(\mu)=\left(\sum_{i \in I} L_1(\mu_i)\right)_1.
$$
Thus every element $f \in L_1(\mu)$ can be considered as a (possibly transfinite) countably supported sequence of functions which are integrable with respect to different measures $\mu_i$.
We will refer to its support in this sense.

Let $I_0$ be the union of the supports of all involved functions: $f$, $f_n$, $g_n$, $h_n$.
Thus they all belong to $\left(\sum_{i \in I_0} L_1(\mu_i)\right)_1$ and it is clear that this space is isometric to $L_1(\nu)$ for some $\sigma$-finite measure $\nu$.
It is standard (see~\cite[Section 5.1]{AlKa}) that this implies that it is also isometric to $L_1(\kappa)$ for some probability measure $\kappa$.
Therefore, since $f$ is extreme in $\ball{L_1(\nu)}$, the proof of the probability measure case implies that it is preserved extreme.
This contradiction finishes the proof.
\end{proof}

If we consider in particular $X=\lipfree{M}$ when $M$ is a subset of an $\RR$-tree then we get the following consequence:

\begin{corollary}
\label{cr:extreme_points_trees}
Let $M$ be a complete subset of an $\RR$-tree. Then $\gamma \in \F M$ is an extreme point of $B_{\F M}$ if and only if
\begin{equation}
\label{eq:molecule}
\gamma = \frac{\delta(x) - \delta(y)}{d(x,y)}
\end{equation}
for some $x \neq y \in M$ such that $[x,y]\cap M=\set{x,y}$. 
Moreover, all extreme points of $\ball{\lipfree{M}}$ are preserved and exposed.
\end{corollary}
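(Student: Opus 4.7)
The plan is to combine Godard's isometric embedding of $\F(M)$ into $L_1$ (valid here because $M$ sits in an $\RR$-tree) with the new Theorem~\ref{t:MLURallMeasures} and the characterization of molecule extreme points from \cite{AlPe_2019}. Godard's theorem \cite{Godard_2010} provides an isometric linear embedding $\F(M) \hookrightarrow L_1(\mu)$, so Theorem~\ref{t:MLURallMeasures} immediately implies that every extreme point of $B_{\F(M)}$ is preserved. The main result of \cite{AlPe_2019} asserts that a preserved extreme point of a Lipschitz-free unit ball must be a molecule $(\delta(x) - \delta(y))/d(x,y)$ for a pair $x \neq y$ satisfying a metric ``no midpoint'' condition; in an $\RR$-tree this reduces exactly to $[x,y] \cap M = \set{x,y}$. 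Indeed, any $z \in M \cap (x, y)$ lies on the unique geodesic $[x,y]$ and yields the direct convex decomposition
\[
\frac{\delta(x) - \delta(y)}{d(x,y)} = \frac{d(x,z)}{d(x,y)}\cdot\frac{\delta(x) - \delta(z)}{d(x,z)} + \frac{d(z,y)}{d(x,y)}\cdot\frac{\delta(z) - \delta(y)}{d(z,y)},
\]
which contradicts extremality and simultaneously establishes the necessity of the segment condition.

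For the converse, together with the ``exposed'' conclusion, I would exhibit an explicit exposing Lipschitz functional. Let $T$ be an $\RR$-tree containing $M$, take the $1$-Lipschitz metric projection $\pi_{[x,y]}\colon T \to [x,y]$ provided by Fact~\ref{fact:metricr_projection}, and compose with the canonical parametrization $\phi_{xy}\colon [x,y] \to [0, d(x,y)]$ to obtain $F = \phi_{xy} \circ \pi_{[x,y]}$ restricted to $M$ (shifted by a constant so that $F \in \Lip_0(M)$). Then $\lipnorm{F} = 1$ and $F(y) - F(x) = d(x,y)$. The crucial calculation, using the tripod structure of $\RR$-trees together with the geodesic decomposition of $d(p,q)$ through $\pi_{[x,y]}(p)$ and $\pi_{[x,y]}(q)$, shows that any pair $(p,q) \in M^2$ saturating $F(p) - F(q) = d(p, q)$ must satisfy $p, q \in [x,y]$; thus the hypothesis $[x,y] \cap M = \set{x,y}$ leaves $(y, x)$ as the unique pair at which $F$ attains its Lipschitz constant.

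The main obstacle is to promote this ``uniqueness of norm-attaining pair'' to the statement that $(\delta(y) - \delta(x))/d(x,y)$ is the unique element of $B_{\F(M)}$ at which $\duality{F, \cdot}$ equals $1$. My plan is to approximate an arbitrary $\nu \in B_{\F(M)}$ with $\duality{F, \nu} = 1$ by finite sums $\sum_i \alpha_i (\delta(p_i) - \delta(q_i))/d(p_i, q_i)$ with $\sum_i \abs{\alpha_i} \to \norm{\nu} \leq 1$; the condition $\duality{F, \nu} = 1$ together with $\abs{F(p_i) - F(q_i)}/d(p_i, q_i) \leq 1$ forces the mass to concentrate on positively weighted pairs whose Lipschitz quotient tends to $1$. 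Combined with the equality case analysis above, a compactness/limit argument in $M \times M$ drives such pairs to $(y, x)$, so in the limit $\nu = (\delta(y) - \delta(x))/d(x,y)$. Since exposed points are extreme, this simultaneously closes the characterization and the ``exposed'' assertion, while ``preserved'' is already supplied by Theorem~\ref{t:MLURallMeasures}.
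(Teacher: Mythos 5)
Your treatment of the implication ``$\Rightarrow$'' and of the ``preserved'' assertion is exactly the paper's argument: Godard's theorem gives $\F M\subseteq L_1(\mu)$, Theorem~\ref{t:MLURallMeasures} then shows every extreme point of $\ball{\F M}$ is preserved, and the known description of preserved extreme points of Lipschitz-free balls (the paper cites \cite[Corollary 3.44]{Weaver2} rather than \cite{AlPe_2019}, but this is immaterial) together with the convex splitting through an intermediate point $z\in M\cap(x,y)$ yields the molecule form and the segment condition. That half is fine.

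The gap is in your self-contained proof of the converse and of exposedness, which the paper does not attempt: it simply invokes \cite[Theorem 3.2]{APPP_2019} for both. Your functional $F=\phi_{xy}\circ\pi_{[x,y]}$ is the right object, and your equality-case analysis (the Lipschitz constant of $F$ is attained only at pairs in $[x,y]\cap M=\set{x,y}$) is correct. But the final step --- promoting ``$F$ attains its norm only at $(y,x)$'' to ``$F$ exposes the molecule'' --- does not follow from the concentration argument you sketch. Writing $\nu=\lim_n\nu_n$ with $\nu_n=\sum_i a_i^{(n)}\bigl(\delta(p_i)-\delta(q_i)\bigr)/d(p_i,q_i)$ and $\sum_i\abs{a_i^{(n)}}\to 1$, the condition $\duality{F,\nu}=1$ only forces the weighted Lipschitz quotients $(F(p_i)-F(q_i))/d(p_i,q_i)$ to concentrate near $1$; it gives no control whatsoever on $d(p_i,q_i)$ or on the location of the pairs beyond their being close to $[x,y]$. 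Pairs $p_i,q_i$ both lying within distance $\varepsilon$ of the endpoint $y$ can have quotient arbitrarily close to $1$ while the corresponding molecules are nowhere near $(\delta(y)-\delta(x))/d(x,y)$, so the ``compactness/limit argument in $M\times M$'' does not drive the pairs to $(y,x)$. In general, uniqueness of the norm-attaining pair does not imply that the functional is exposing, and ruling out the representation mass that escapes to short pairs near the endpoints requires a genuinely additional argument (for instance testing $\nu$ against further functions, or the support techniques of \cite{APPP_2019}). As written, both the ``$\Leftarrow$'' implication and the exposedness claim rest on this unproved step; either supply that argument or cite \cite[Theorem 3.2]{APPP_2019} as the paper does.
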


\begin{proof}
The implication ``$\Leftarrow$'' and the fact that any extreme point of the form \eqref{eq:molecule} is exposed have been proved in \cite[Theorem 3.2]{APPP_2019}. 
The implication ``$\Rightarrow$'' follows from Theorem \ref{t:MLURallMeasures}, the fact that $\Free(M) \subseteq L_1(\mu)$ for some measure $\mu$ \cite{Godard_2010} and the fact that, in Lipschitz-free spaces, preserved extreme points of the unit ball have the form \eqref{eq:molecule} \cite[Corollary 3.44]{Weaver2}.
\end{proof}

Note that, although all extreme points of $\ball{\lipfree{M}}$ are preserved and exposed, not all of them are necessarily strongly exposed even if $M$ is compact, as showcased e.g. by \cite[Example 6.4]{GPPR_2018}.

\section*{Acknowledgments}

This research was carried out during a visit of the first author to the Laboratoire de Math\'ematiques de Besan\c{c}on in 2019. He is grateful for the opportunity and the hospitality.

This work was supported by the French ``Investissements d'Avenir'' program, project ISITE-BFC (contract ANR-15-IDEX-03). R. J. Aliaga was also partially supported by the Spanish Ministry of Economy, Industry and Competitiveness under Grant MTM2017-83262-C2-2-P.
The authors would like to thank Abraham Rueda Zoca for his valuable suggestions.


\end{document}